
 \def\misajour{$3/5/2014$}

\documentclass[10pt]{article}

\usepackage[applemac]{inputenc}
\usepackage[pdftex]{graphicx}

\usepackage{amsmath}

\usepackage{amsthm}

\usepackage[applemac]{inputenc}

\usepackage[pdftex]{graphicx}
\usepackage {hyperref}

\def\C{\mathbf {C}}
\def\Q{\mathbf{ Q}}
\def\R{\mathbf {R}}
\def\Z{\mathbf {Z}}
\def\ZK{\Z_K}

\def\Atilde{\tilde{A}}
\def\Btilde{\tilde{B}}

\def\rmN{\mathrm{N}}
\def\calE{{\mathcal{E}}}
\def\rmM{\mathrm{M}}
\def\rmh{\mathrm{h}}
\def\tors{{\mathrm{tors}}}
\def\Imag{{\mathrm{Im}}}

\def\house#1{\setbox1=\hbox{$\,#1\,$}%
\dimen1=\ht1 \advance\dimen1 by 2pt \dimen2=\dp1 \advance\dimen2 by 2pt
\setbox1=\hbox{\vrule height\dimen1 depth\dimen2\box1\vrule}%
\setbox1=\vbox{\hrule\box1}%
\advance\dimen1 by .4pt \ht1=\dimen1
\advance\dimen2 by .4pt \dp1=\dimen2 \box1\relax}

\newtheorem{theoreme}[equation]{\indent Theorem}
\newtheorem{lemme}[equation]{\indent Lemma}
\newtheorem{corollaire}[equation]{\indent Corollary}
\newtheorem{proposition}[equation]{\indent Proposition}

\newcounter{compteurkappa}

\def\Newcst#1{
\refstepcounter{compteurkappa}
\kappa_{
\arabic{compteurkappa}}
\label{#1}
}

\def\cst#1{\kappa_{\ref{#1}}}

\def\boxit#1#2{\setbox1=\hbox{\kern#1{#2}\kern#1}%
\dimen1=\ht1 \advance\dimen1 by #1 \dimen2=\dp1 \advance\dimen2 by #1
\setbox1=\hbox{\vrule height\dimen1 depth\dimen2\box1\vrule}%
\setbox1=\vbox{\hrule\box1\hrule}%
\advance\dimen1 by .4pt \ht1=\dimen1
\advance\dimen2 by .4pt \dp1=\dimen2 \box1\relax}

 \begin{document}

 \hfill

 \null
 \vskip -3 true cm

 \hfill
 {\it \misajour}

 \bigskip

\begin{center}
 \smallskip
{ \Large {\bf
Solving simultaneously
Thue equations}}

\smallskip
{\Large {\bf
in the almost totally imaginary case}}
\bigskip

{\large {\sc Claude Levesque} and {\sc
Michel Waldschmidt}}

\end{center}

\section*{Abstract}
Let $\alpha$ be an algebraic number of degree $d\ge 3$ having at most one real conjugate and let $K$ be the
algebraic number field $\Q(\alpha)$. For any unit $\varepsilon$ of $K$ such that $\Q(\alpha\varepsilon)=K$,
we consider the irreducible polynomial $f_\varepsilon(X)\in\Z[X]$ such that $f_\varepsilon(\alpha\varepsilon)=0$.
Let $F_\varepsilon(X,Y)\ = Y^df_\varepsilon(X/Y)\in\Z[X,Y]$ be the associated binary form. For each positive integer $m$,
we exhibit an effectively computable bound for the solutions $(x,y,\varepsilon)$ of the diophantine equation $|F_\varepsilon(x,y)|\leq m$.

\section{The main theorem}\label{S:ThPpal}
\mbox{} \indent
Let $\alpha$ be an algebraic number of degree $d\ge 3$ over $\Q$. Denote by $K$ the algebraic number field $\Q(\alpha)$,
by $f\in \Z[X]$ the irreducible polynomial of $\alpha$ over $\Z$, by $ \ZK^\times$ the unit group of $K$. For any unit $\varepsilon\in \ZK^\times$ such that $\Q(\alpha\varepsilon)=K$, we denote by $f_{\varepsilon}(X)\in \Z[X]$ the irreducible polynomial of
$\alpha\varepsilon$ over $\Z$ (unique if we require the leading coefficient to be $>0$) and by $F_{\varepsilon}(X,Y)\in \Z[X,Y]$
the irreducible binary form associated to $f_{\varepsilon}(X)$ via the condition $F_{\varepsilon}(X,1)=f_{\varepsilon}(X)$. \medskip

A particular case of Corollary 3.6 of \cite{LW1} (dealing with Thue--Mahler equations, though Thue equations are involved in this paper),
is the following one.

\begin{theoreme}

Let $m$ be a rational positive integer. Then the set $\calE$, where
$$
\calE=
\bigl\{
(x,y,\varepsilon)\in\Z^2\times\ZK^\times\; \mid
\;
xy\not=0,
\;
\Q(\alpha\varepsilon)=K
 \;\;\hbox{and}\;\; |F_\varepsilon(x,y)|\le m\bigr\},
$$
is finite.
\end{theoreme}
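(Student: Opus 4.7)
The plan is to combine a geometric inequality arising from the almost totally imaginary hypothesis with an $S$-unit analysis via Siegel's identity and Baker's theorem on linear forms in logarithms, thereby bounding effectively the size of $(x,y)$ and the exponents obtained when $\varepsilon$ is expressed in a fundamental system of units of $\ZK^\times$.

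First, I would factor the form as $F_\varepsilon(X,Y)=a_\varepsilon \prod_{i=1}^d (X-\beta_i Y)$, where $\beta_1,\dots,\beta_d$ are the conjugates of $\alpha\varepsilon$. Since at most one conjugate of $\alpha$ is real, the same holds of the $\beta_i$, and for every non-real $\beta_i$ the real rationality of $x/y$ forces
$$
|x-\beta_i y|\ge |y|\cdot|\Imag \beta_i|.
$$
Multiplying these inequalities together and combining with the product bound $\prod_i|x-\beta_i y|\le m/|a_\varepsilon|\le m$ produces
$$
|y|^{d-1}\prod_{\beta_i\notin\R}|\Imag\beta_i|\le m,
$$
which bounds $|y|$ as soon as the product of the imaginary parts is bounded below; $|x|$ is then bounded by a similar argument once $|y|$ and $\varepsilon$ are controlled. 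The obstacle is that $\prod|\Imag\beta_i|$ can shrink exponentially when $\varepsilon$ is chosen so as to rotate $\alpha\varepsilon$ close to the real axis under some embedding --- this is exactly what must be excluded by the next step.

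Second, I would invoke Siegel's identity: for three distinct indices $i,j,k$,
$$
(\beta_j-\beta_k)(x-\beta_i y)+(\beta_k-\beta_i)(x-\beta_j y)+(\beta_i-\beta_j)(x-\beta_k y)=0.
$$
Dividing through by one of the summands produces a three-term unit equation in a splitting field of $f_\varepsilon$ whose terms are ratios of products of the differences $\beta_i-\beta_j$ (each expressible through $\alpha$, its conjugates, and the units $\eta_1,\dots,\eta_r$ appearing in a decomposition $\varepsilon=\zeta\eta_1^{b_1}\cdots\eta_r^{b_r}$) and of the linear factors $x-\beta_i y$ (which, together with $m$ and $a_\varepsilon$, generate only finitely many possible prime supports). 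Applying Baker's theorem on linear forms in logarithms to the logarithm of one of these three terms and exploiting the smallness forced by the identity itself yields an effective bound of the shape $\max|b_j|\ll (\log\max(|x|,|y|))^C$.

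The main obstacle is coupling these two bounds: the geometric one gives $|y|$ bounded by a negative power of $\prod|\Imag\beta_i|$, which is essentially exponential in $\max|b_j|$, while Baker's bound gives $\max|b_j|$ polylogarithmic in $|y|$. Substituting the latter into the former collapses to an absolute effective bound on $|y|$, hence on $|x|$ and on $\max|b_j|$, which in turn bounds $\varepsilon$ modulo the finite torsion subgroup of $\ZK^\times$. This is precisely the mechanism developed in \cite{LW1}: the present statement is the case of Corollary 3.6 there with empty set of finite places, specialized from Thue--Mahler to plain Thue inequalities.
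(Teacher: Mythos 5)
Your argument does not prove the statement as given, because it invokes the hypothesis that $\alpha$ has at most one real conjugate --- a hypothesis that appears only in Theorem~\ref{theoreme:principal}, not in the theorem at hand, which is stated for arbitrary $\alpha$ of degree $d\ge3$. You correctly identify the statement as Corollary~3.6 of \cite{LW1} specialized to Thue (rather than Thue--Mahler) inequalities, but you mischaracterize how that corollary is proved: \cite{LW1} uses Schmidt's subspace theorem, not Baker's theorem on linear forms in logarithms, and the paper explicitly observes that this route bounds the \emph{number} of solutions but gives no bound on their heights. An effective Baker-based argument is precisely what the paper contributes in Theorem~\ref{theoreme:principal}, and it needs the almost-totally-imaginary hypothesis; such an argument cannot prove the present statement, which holds in full generality and whose proof in the literature is inherently ineffective.

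Even read as a sketch of Theorem~\ref{theoreme:principal}, the geometric step has a gap. From $\prod_i|x-\beta_i y|\le m$ you cannot deduce $|y|^{d-1}\prod_{\beta_i\notin\R}|\Imag\beta_i|\le m$ when there is a real conjugate $\beta_j$, because nothing prevents $|x-\beta_j y|$ from being much smaller than $1$; discarding that factor can only weaken, not strengthen, the inequality. The imaginary-part bound is used in the paper only in Lemma~\ref{Lemme:ThueTotalementImaginaire}, for a single totally imaginary form. For the family, the proof instead works with the exponent parameters $A$ and $B$ arising from the unit decompositions of $\varepsilon$ and of $\beta=x-\alpha\varepsilon y$, proves in Lemma~\ref{Lemme:UniciteTau} that the embedding $\tau_b$ minimizing $|\tau_b(\beta)|$ is unique --- hence equal to the real embedding when one exists --- and then runs Siegel's identity three separate times with the triples $(\sigma_b,\tau_b,\varphi)$, $(\sigma_b,\overline{\sigma_b},\tau_b)$, $(\sigma_a,\overline{\sigma_a},\tau_b)$, feeding each into Corollaries~\ref{Cor:MinorationCLL1} and~\ref{Cor:MinorationCLL2} to obtain $A\ll B$, $B\ll A$ and $(A+B)/\log m\ll\log\bigl((A+B)/(1+\log m)\bigr)$, which together close the argument. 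The obstacle you name (the product of imaginary parts shrinking) is not the one the paper actually overcomes, and the device by which the real embedding is neutralized --- the uniqueness of $\tau_b$ --- is absent from your sketch.
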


The proof in \cite{LW1}, involving Schmidt's subspace theorem, allows to give an effectively computable upper bound for the number of elements of
 $\calE$ as a function of $m$, $d$ and the absolute logarithmic height $\rmh(\alpha)$ of $\alpha$ (whose definition will be reminded below),
 but does not allow to give a bound for the heights of the elements of $\calE$. The main result of this paper gives an effectively computable
upper bound for the solutions in the particular case where the algebraic number field $K$ has at most one real embedding into $\C$.

\begin{theoreme}\label{theoreme:principal}
Suppose that the algebraic number field $K$ has at most one real embedding. Then there exists an effectively computable constant $\Newcst{kappa1}>0$,
 depending only on $\alpha$, such that, for all $m\ge 2$, each solution $(x,y,\varepsilon)\in\Z^2\times\ZK^\times$ of the
 Thue inequality $| F_\varepsilon(x,y)|\leq m$ with $xy\neq 0$ and $\Q(\alpha\varepsilon)=K$ satisfies
$$
\max\{|x|,\; |y|,\; e^{\rmh(\alpha\varepsilon)}\}\le m^{\cst{kappa1}}.
$$
\end{theoreme}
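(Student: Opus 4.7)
The plan is to combine the classical Thue--Baker machinery, which bounds $|x|,|y|$ in terms of $\rmh(\alpha\varepsilon)$ when $\varepsilon$ is fixed, with a new effective height bound $\rmh(\alpha\varepsilon)\le c\log m$ extracted from the almost totally imaginary hypothesis. Factor $F_\varepsilon(X,Y)=a_0\prod_{i=1}^d(X-\beta_iY)$ with $\beta_i=\sigma_i(\alpha\varepsilon)$, the $\sigma_i$ being the embeddings of $K$ into $\C$ listed so that the (at most one) real embedding comes first. Given a solution $(x,y,\varepsilon)$, let $i_0$ minimise $\ell:=|x-\beta_{i_0}y|$. For $j\neq i_0$ the inequality $|x-\beta_jy|\ge \tfrac12|\beta_j-\beta_{i_0}||y|$ holds once $|y|$ exceeds a quantity effective in $\rmh(\alpha\varepsilon)$, so $|F_\varepsilon(x,y)|\le m$ yields
$$
\log\ell\;\le\;\log m-(d-1)\log|y|+O\bigl(\rmh(\alpha\varepsilon)\bigr).
$$
Applying Baker's theorem on linear forms in complex logarithms to Siegel's identity
$$
(\beta_j-\beta_k)(x-\beta_{i_0}y)+(\beta_k-\beta_{i_0})(x-\beta_jy)+(\beta_{i_0}-\beta_j)(x-\beta_ky)=0,
$$
in which $\ell$ figures as the value of a two-variable linear form in logarithms of algebraic numbers of controlled height, gives a matching lower bound $\log\ell\ge-c_1\bigl(\log\max(|x|,|y|)\bigr)P(\rmh(\alpha\varepsilon))$ with $P$ a fixed polynomial and $c_1$ depending only on $\alpha$. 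Comparing the two estimates yields the classical Thue--Baker output $\log\max(|x|,|y|)\le c_2\,P(\rmh(\alpha\varepsilon))\log m$.

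The decisive new step is to control $\rmh(\alpha\varepsilon)$ itself. For any complex embedding $\sigma$ and integers $x,y$,
$$
|x-\sigma(\alpha\varepsilon)y|^2\;=\;\bigl(x-y\,\mathrm{Re}\,\sigma(\alpha\varepsilon)\bigr)^2+y^2\bigl(\Imag\sigma(\alpha\varepsilon)\bigr)^2\;\ge\;y^2\bigl(\Imag\sigma(\alpha\varepsilon)\bigr)^2.
$$
Complex embeddings pair up as $\sigma,\bar\sigma$ with $|x-\sigma(\alpha\varepsilon)y|=|x-\bar\sigma(\alpha\varepsilon)y|$, so multiplying over the $r_2$ complex pairs and the (at most one) real place yields
$$
m\;\ge\;|F_\varepsilon(x,y)|\;\ge\;|a_0|\,|y|^{2r_2}\prod_{\sigma\text{ complex}}|\Imag\sigma(\alpha\varepsilon)|^2\cdot R(x,y,\varepsilon),
$$
where $R$ is the contribution of the real place (equal to $1$ if $r_1=0$). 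Since each $\Imag\sigma(\alpha\varepsilon)$ is a half-difference of distinct conjugates of $\alpha\varepsilon$, Mahler's separation inequality gives $|\Imag\sigma(\alpha\varepsilon)|\ge c_3\,\rmM(f_\varepsilon)^{-(d-2)}$, and combined with $|F_\varepsilon(x,y)|\le m$ this constrains $\max_\sigma|\sigma(\alpha\varepsilon)|$: any exceptionally large conjugate would force too large a factor $|x-\beta_jy|$ at some place, violating the Thue bound. Quantifying this balancing argument, using that the logarithmic embedding of $\varepsilon$ has rank $r_1+r_2-1\le(d-1)/2$ so the $\log|\sigma(\varepsilon)|$ cannot grow freely at every place, yields an effective bound $\rmh(\alpha\varepsilon)\le c_4\log m+c_5$. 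Substituted into the previous Thue--Baker estimate, this produces the inequality of the theorem.

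The main obstacle is the circular dependency between the two steps: the Baker estimate depends on $\rmh(\alpha\varepsilon)$, and the height bound rests implicitly on some control of $|y|$; disentangling them requires a careful iteration, seeded by $|y|\ge 1$. The almost totally imaginary hypothesis is essential at exactly one spot, namely the inequality $|x-\sigma(\alpha\varepsilon)y|\ge|y|\cdot|\Imag\sigma(\alpha\varepsilon)|$, which is vacuous at a real place of $K$; without it there is no comparable mechanism for extracting an effective upper bound on $\rmh(\alpha\varepsilon)$ from the Thue inequality $|F_\varepsilon(x,y)|\le m$ alone, which is why the case of more than one real embedding remains out of reach of the present method.
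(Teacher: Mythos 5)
Your plan splits the theorem into (i) a classical Thue--Baker bound $\log\max(|x|,|y|)\le c\,P(\rmh(\alpha\varepsilon))\log m$, and (ii) a separate effective height bound $\rmh(\alpha\varepsilon)\le c_4\log m+c_5$. Step (i) is standard; the proposal fails at step (ii), and there is a secondary problem even if (ii) were granted.

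The chain
$$
m\ge |F_\varepsilon(x,y)|\ge |a_0|\,|y|^{2r_2}\prod_{\sigma\text{ complex}}|\Imag\sigma(\alpha\varepsilon)|^2\cdot R
$$
combined with a Mahler-type separation bound $|\Imag\sigma(\alpha\varepsilon)|\ge c_3\,\rmM(f_\varepsilon)^{-(d-2)}$ gives only
$$
m\ge c\,|y|^{2r_2}\,\rmM(f_\varepsilon)^{-2r_2(d-2)},
$$
an inequality that becomes \emph{weaker} as $\rmM(f_\varepsilon)$ grows; it can never force $\rmM(f_\varepsilon)$ to be small. The ``balancing argument'' you gesture at (a very large conjugate of $\alpha\varepsilon$ must be compensated by a very small $|x-\sigma(\alpha\varepsilon)y|$ somewhere, which cannot happen at a complex place) is indeed the correct heuristic, but quantifying it is precisely the non-elementary content of the theorem. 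The paper explicitly remarks that even the totally imaginary case requires a diophantine argument as soon as one has a \emph{family} of forms. The rank remark about the logarithmic embedding of units does not supply the missing estimate: it restricts the geometry of $\log|\sigma(\varepsilon)|$ but gives no quantitative lower bound for $|x-\sigma(\alpha\varepsilon)y|$ at the small place without a Baker-type argument on the relevant Siegel unit equation. In short, step (ii) needs its own applications of linear forms in logarithms, and you have not produced them.

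There is also a quantitative mismatch in step (i). The Baker lower bound for the linear form arising from Siegel's identity has the shape $\exp\{-\kappa H_1\cdots H_s\log C_0\}$ where several of the $H_j$ dominate $\rmh$ of differences $\sigma_i(\alpha\varepsilon)-\sigma_j(\alpha\varepsilon)$ and hence are of order $\rmh(\alpha\varepsilon)$. The resulting exponent is therefore at least quadratic in $\rmh(\alpha\varepsilon)$. Even assuming $\rmh(\alpha\varepsilon)\le c\log m$, you would obtain $\log\max(|x|,|y|)\le c'(\log m)^\delta$ with $\delta\ge 2$, which is not the polynomial bound $m^{\kappa_1}$ of the theorem. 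The paper sidesteps this by parametrizing $\varepsilon$ and $\beta=x-\alpha\varepsilon y$ through the exponent vectors $(a_1,\dots,a_r)$ and $(b_1,\dots,b_r)$ relative to a fixed basis of units, so the Baker inputs $\gamma_j$ are ratios of conjugates of \emph{fixed} algebraic numbers (the $\epsilon_j$, and $\varrho$ of height $O(\log m)$); the heights $H_1,\dots,H_r$ are absolute constants and only $H_s=O(\log m)$, making the Baker exponent linear in $\log m$ and allowing the self-referential inequality of Lemma~\ref{Lemme:AmajoreParLogB} to close. Finally, the almost totally imaginary hypothesis is used in the paper not as an imaginary-part inequality at every complex place, but more delicately: a diophantine argument (Lemma~\ref{Lemme:UniciteTau}) shows the minimizing embedding $\tau_b$ for $|\varphi(\beta)|$ is unique, hence real, and then the other embeddings come in conjugate pairs $\sigma,\overline\sigma$, which is exactly what makes the unit equations in Lemmas~\ref{Lemme:BmajoreParA} and~\ref{Lemme:AmajoreParLogB} available.
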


The conclusion of  Theorem $\ref{theoreme:principal}$ can be stated with the use of the norm $\rmN_{K/\Q}$, since $\rmN_{K/\Q}(X-\alpha\varepsilon Y)=a_0^{-1}F_\varepsilon(X,Y)$: {\it for
$(x,y,\varepsilon)\in\Z^2\times\ZK^\times$ with $xy\neq 0$ and $ \Q(\alpha\varepsilon)=K$, one has
$$
|\rmN_{K/\Q}(x-\alpha\varepsilon y)|
\ge
\Newcst{kappabis}
\max\{|x|,\; |y|,\; e^{\rmh(\alpha\varepsilon)}\}^{\Newcst{kappa1ter}}
$$
with two effectively computable positive constants $\cst{kappabis}$ and $\cst{kappa1ter}$ depending only on $\alpha$. }
\medskip

If the algebraic number field $K$ has a unique real embedding into $\C$, the degree $d$ of $K$ is odd and $K$ has $(d-1)/2$ pairs of
 complex embeddings.
In the particular case $d=3$, namely for a cubic field $K$, the hypothesis that there is only one real embedding boils down to requiring
that the unit rank of $K$ be $1$, and this particular case of Theorem $\ref{theoreme:principal}$ was obtained in \cite{LW2}.
\medskip

 When $K$ has no real embedding, namely when $K$ is totally imaginary, the degree $d$ of $K$ is even and $K$ has $d/2$ pairs of
 complex embeddings. It looks like the totally imaginary case of Theorem $\ref{theoreme:principal}$ is not trivial and requires a
 diophantine argument. Only the case of a Thue equation is elementary (and not a family of such equations).

\begin{lemme}\label{Lemme:ThueTotalementImaginaire}
Let $F(X,Y)\in\Z[X,Y]$ be a binary form of degree $d$ with integer coefficients and without a real root, (so $F$ is a product over $\R$ of
 definite positive quadratic forms). Then there exists an effectively computable constant $\Newcst{kappa2}$ such that, for each $m>0$, each solution $(x,y)\in\Z^2$ of the Thue inequality
$$
|F(x,y)|\leq m
$$
satisfies
$$
\max\{|x|,\; |y|\}\leq \cst{kappa2} m^{1/d}.
$$
\end{lemme}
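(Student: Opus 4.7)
The plan is to exploit the hypothesis that $F$ has no real root in order to bound $|F(x,y)|$ from below by a power of $x^{2}+y^{2}$. The key observation is a compactness argument: since $F\in\R[X,Y]$ is homogeneous of degree $d$ and has no real zero apart from $(0,0)$, the continuous function $(X,Y)\mapsto |F(X,Y)|$ is strictly positive on the compact unit circle $S=\{(X,Y)\in\R^{2}\mid X^{2}+Y^{2}=1\}$. Hence it attains a minimum
$$
\mu=\min_{(X,Y)\in S}|F(X,Y)|>0,
$$
and $\mu$ depends only on $F$ (equivalently, only on $\alpha$ once $F$ is fixed).

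Next I would use homogeneity of degree $d$: for any $(x,y)\in\R^{2}\setminus\{(0,0)\}$, writing $r=\sqrt{x^{2}+y^{2}}$, one has
$$
|F(x,y)|=r^{d}\,\bigl|F(x/r,y/r)\bigr|\ge \mu\,(x^{2}+y^{2})^{d/2}.
$$
Combining this with the hypothesis $|F(x,y)|\le m$ gives $(x^{2}+y^{2})^{d/2}\le m/\mu$, whence
$$
\max\{|x|,|y|\}\le \sqrt{x^{2}+y^{2}}\le \mu^{-1/d}m^{1/d},
$$
so one can take $\cst{kappa2}=\mu^{-1/d}$.

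To make contact with the parenthetical remark in the statement (that $F$ is a product over $\R$ of positive definite quadratic forms), I would note that since the complex roots of $f(X)=F(X,1)$ come in conjugate pairs $\alpha,\bar\alpha$, the form factors as
$$
F(X,Y)=a_{0}\prod_{j=1}^{d/2}\bigl(X^{2}-2\Re(\alpha_{j})XY+|\alpha_{j}|^{2}Y^{2}\bigr),
$$
and each quadratic factor has discriminant $-4\Im(\alpha_{j})^{2}<0$, hence is definite; after absorbing a sign into $a_{0}$ each factor is positive definite, so admits its own explicit lower bound $c_{j}(X^{2}+Y^{2})$ with $c_{j}>0$. Taking the product recovers the bound $|F(x,y)|\ge |a_{0}|\prod_{j}c_{j}\cdot(x^{2}+y^{2})^{d/2}$, giving an alternative proof (and in principle an explicit value of $\cst{kappa2}$ in terms of the roots of $f$).

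There is essentially no obstacle here: the argument is purely elementary real analysis on a compact set and does not require any diophantine input; this is exactly what the introduction emphasises, namely that the single Thue equation in the totally imaginary case is trivial, while the family of such equations is not.
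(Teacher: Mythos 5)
Your argument is correct, and it takes a genuinely different route from the paper. The paper works directly with the linear factorization $F(X,Y)=a_0\prod_{j=1}^{d/2}(X-\alpha_jY)(X-\overline{\alpha_j}Y)$ over $\C$: the imaginary parts of the roots give the lower bound $|x-\alpha_jy|\ge|\Imag(\alpha_j)|\,|y|$, which controls $|y|$ with the fully explicit constant $|a_0|^{-1/d}\prod_j|\Imag(\alpha_j)|^{-2/d}$, and a short case distinction (either $|x|$ is already $\le 2(\max_j|\alpha_j|)|y|$, or $|x-\alpha_jy|\ge|x|/2$ for all $j$) then bounds $|x|$. Your main argument instead invokes compactness of the unit circle together with homogeneity of degree $d$ to produce a single lower bound $|F(x,y)|\ge\mu(x^2+y^2)^{d/2}$, which is cleaner and dispenses with the case split. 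The one caveat is that the compactness argument as stated does not by itself exhibit $\mu$ as an \emph{effectively computable} constant, which the lemma requires; your supplementary paragraph on the positive definite quadratic factors repairs this (each factor $X^2-2\Re(\alpha_j)XY+|\alpha_j|^2Y^2$ has smallest eigenvalue $\tfrac12\bigl(1+|\alpha_j|^2-\sqrt{(1-|\alpha_j|^2)^2+4\Re(\alpha_j)^2}\bigr)$, an explicit quantity in the roots of $f$), so taking that route as the actual proof, rather than as a remark, would match the effectivity requirement. The paper's proof and yours thus buy different things: the paper's is slightly longer but yields the sharp constant directly in terms of $|\Imag(\alpha_j)|$; yours is shorter and more conceptual, at the modest cost of having to expand the remark on quadratic factors to secure effectivity.
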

\indent \begin{proof}[\indent Proof]
Write
$$
F(X,Y)=a_0\prod_{j=1}^{n} (X-\alpha_jY)(X-\overline{\alpha_j}Y),
$$
where $n=d/2$, where $a_0$ is the leading coefficient of $F(X,1)$, while $\alpha_j$ and $\overline{\alpha_j}$ $(j=1,\dots,n)$ are the roots of $F(X,1)$. Since
$$
|x-\alpha_jy|=|x-\overline{\alpha_j}y|\ge |\Imag(\alpha_j)|\cdot |y|,
$$
where $\Imag(z)$ is the imaginary part $(z-\overline{z})/2i$ of the complex number $z$,
we have
$$
|y| \le \Newcst{kappa2bis} m^{1/d}
\quad\hbox{with}\quad
\cst{kappa2bis}= |a_0 |^{-1/d} \prod_{j=1}^{n} |\Imag(\alpha_j)|^{-2/d}.
$$
If
$$
|x|\le 2\left(\max_{1\le j\le n} |\alpha_j|\right) |y|,
$$
the proof of Lemma $\ref{Lemme:ThueTotalementImaginaire}$ is complete. Otherwise, we have $|x-\alpha_j y|\ge |x|/2$
for $j=1,\dots,n$, and we get
$$
|x|\le 2\left(
\frac{m}{|a_0|}\right)^{1/d},
$$
which concludes the proof.
\end{proof}

The hypothesis that there is at most one real embedding will play an essential role in the proof of Theorem $\ref{theoreme:principal}$. However, a significant part of the proof is valid without this hypothesis, and we plan to pursue our work on the subject. \medskip


\noindent
{\bf Example.} Let $D\geq 2$, $c\in \{1, -1\}$ and  let $K={\bf
Q}(\theta)$ with $ \theta = (1+i)\root 4 \of { D^4+c}$. It is a result of Stender
\cite{St} that
 $$\alpha = \varepsilon = D^2+D\theta+\frac12 \theta^2$$
  is the fundamental unit  of $K$. Notice that $ \theta ^4= -4(D^4+c)$ and $|\varepsilon|>1$.
The minimal polynomial of $\varepsilon$ is
$$
f(X) = X^4-4D^2X^3+(8D^4+2c)X^2+4cD^2X +1
$$
with
$$
f(X)=(X-\varepsilon_1)(X-\varepsilon_2)(X-\varepsilon_3)
(X-\varepsilon_4)
$$
where   $\varepsilon_1,\; \varepsilon_2,\; \varepsilon_3,\;
\varepsilon_4\;$  are  the conjugates  of $\alpha=\varepsilon_1$. Since $X^4f(-c/X)=f(X)$, we can choose $ \varepsilon_3=-c \varepsilon_1^{-1}$ and
$ \varepsilon_4=-c \varepsilon_2^{-1}$.
For $n\in {\bf Z}$, define $f_n(X)$, $a_n$, $b_n$,
$c_n$ by
$$
X^4+a_nX^3+b_nX^2+c_nX +1
= (X- \varepsilon_1^n)(X- \varepsilon_2^n) (X-
\varepsilon_3^n)(X- \varepsilon_4^n)
$$
and
$$
f_n(X)= (X- \varepsilon_1^{n+1})(X- \varepsilon_2^{n+1}) (X-
\varepsilon_3^{n+1})(X- \varepsilon_4^{n+1}),
$$
so that
$$
f_n(X)=X^4+a_{n+1}X^3+b_{n+1}X^2+c_{n+1}X +1.
$$
Using once more the relation
$X^4f(-c/X)=f(X)$, we deduce
$$
c_n=a_{-n}=(-c)^n a_n\quad \hbox{and}\quad b_{-n}=b_n
$$
for $n\in\Z$.
The sequence $(a_n)_{n\in\Z}$ satisfies the linear recurrence equation
$$
a_{n+4}-4D^2a_{n+3}+(8D^4+2c)a_{n+2}+4cD^2a_{n+1}+a_n=0
$$
with the initial conditions $a_{-1}=4cD^2$, $a_0=-4$, $a_1=-4D^2$, $a_2=4c$. The sequence $(c_n)_{n\in\Z}$ is given by $c_n=a_{-n}$ for $n\in\Z$, while the  sequence $(b_n)_{n\in\Z}$ satisfies the linear recurrence equation
\begin{align}\notag
b_{n+6}-(8D^4+2c)b_{n+5} &- (16cD^4+1) b_{n+4} \\[1mm] \notag
 - (16D^4-4c) b_{n+3}& - (16cD^4+1) b_{n+2} -(8D^4+2c)b_{n+1} +b_n=0,
\end{align}
with the initial conditions
$$\left\{
\begin{array} {cll} 
b_{-2}&=&64 D^8 +64 cD^4 +6,\\[1.5mm]
b_{-1}&=& 8 D^4 +2c,\\[1.5mm]
b_{0}&=&6,\\[1.5mm]
b_{1}&=& 8 D^4 +2c,\\[1.5mm]
b_2&=&64 D^8 +64 cD^4 +6,\\[1.5mm]
b_3&=&512 D^{12} + 1768 D^8c+ 264 D^4 +2c.
\end{array}\right.
$$
Set
$$F_n(X,Y)= Y^4f_n(X/Y),
$$
so that
$$
F_n(X,Y)
=(X-\varepsilon_1^{n+1}Y)(X-\varepsilon_2^{n+1}Y)
(X-\varepsilon_3^{n+1}Y) (X-\varepsilon_4^{n+1}Y).
$$
Let $m$ be a rational positive  integer. Denote by
$${\calE} \;=\; \{ (x,y,n)\in {\bf Z}^3 \,|\, xy\neq 0, \;n\neq -1
 \mbox{ and } |F_n(x,y)| \leq m\}
 $$
the set of solutions of  the Thue inequality
$|F_n(X,Y)| \;\leq\; m$.
 Then from Theorem $\ref{theoreme:principal}$ we deduce the upper bound
 $$
\max\{|x|,\; |y|,\; |\varepsilon|^{ |n|}  \mid (x,y,n)\in {\calE} \} \le \Newcst{kappaexample1} m^{\Newcst{kappaexample2}},
$$
where $\cst{kappaexample1}$ and $\cst{kappaexample2}$ are positive  constants depending only on $D$.

 \bigskip\bigskip

Let us denote by $\Phi=\{\sigma_1,\ldots,\sigma_d\}$ the set of embeddings of $K$ into $\C$. Let us write the irreducible polynomial $f$ of $\alpha$ over $\Z$ as
$$
f(X)=a_0X^d+a_1X^{d-1}+\cdots+a_d\in\Z[X],
$$
so
$$
f(X)= a_0\prod_{i=1}^d \bigl(X-\sigma_i(\alpha)\bigr),
$$
 and the irreducible binary form associated to
$F\in\Z[X,Y]$ is
$$
F(X,Y)=Y^d f(X/Y)=a_0X^d+a_1X^{d-1}Y+\cdots+a_dY^d.
$$
For $\varepsilon\in\ZK^\times$ satisfying $\Q(\alpha\varepsilon)=K$, we have
 $$
F_\varepsilon (X,Y)=a_0\prod_{i=1}^d \bigl(X-\sigma_i(\alpha\varepsilon)Y\bigr)\in\Z[X,Y].
$$

Let us denote by $\rmh$ the absolute logarithmic height and by $\rmM$ the Mahler measure. Hence,
$$
\rmh(\alpha)=\frac{1}{d}\log \rmM(\alpha)
\quad
\hbox{where}
\quad
\rmM(\alpha)=
a_0
 \prod_{1\le i\le d} \max\{1,|\sigma_i(\alpha)|\}.
$$

\section{Tools}\label{S:Outils}
\mbox{} \indent
 In this section, let us put together the auxiliary lemmas which will prove useful. We will use some results in geometry of numbers
to establish an equivalence of norms (Lemma $\ref{Lemme:PlongementAdapte}$). Then we state in Lemma 6 what is Lemma 2 of
 \cite{LW2}. Finally in
Proposition $\ref{Proposition:FormeLineaireLogarithmes}$ and in Corollaries $\ref{Cor:MinorationCLL1}
$ and
$\ref{Cor:MinorationCLL2}$ we exhibit some lower bounds of linear forms of logarithms of algebraic numbers.

\subsection{Equivalence of norms}\label{S:EquivalenceNormes}

\mbox{} \indent
Let $K$ be an algebraic number field of degree $d$. We denote by $\epsilon_1,\ldots,\epsilon_r$ a basis of the unit group
of $K$ modulo $K^\times_{\tors}$ and we suppose $r\geq 1$. Let us recall that the house of the algebraic number $\gamma$,
denoted $\house{\gamma}\; $, is the maximum of the absolute values of the conjugates of $\gamma$.
\medskip

Let us first remark that there exists a positive constant $\Newcst{hauteurgamma}$, depending only on
 $\epsilon_1,\ldots,\epsilon_r$, such that, if $c_1,\ldots, c_r$ are rational integers and if we let
$$
\gamma= \epsilon_1^{c_1}\cdots \epsilon_r^{c_r}
\quad
\hbox{with} \quad C=\max\{
|c_1|,\ldots,|c_r|\}
$$
be a unit of $K$, then
\begin{equation}\label{Equation:EstimationsTriviales}
e^{-\cst{hauteurgamma} C }\le |\varphi(\gamma)|\le e^{ \cst{hauteurgamma} C }
\end{equation}
 for any embedding $\varphi$ of $K$ into $\C$. More precisely, the inequalities 
 $$
 |\varphi(\gamma)|\le \prod_{i=1}^r {\house{\epsilon_i}\,}^C
\quad\hbox{and}\quad
\left |\varphi\left(\gamma^{-1}\right) \right|\le \prod_{i=1}^r {\house{\epsilon_i}\,}^C
$$
(note that $\house{\epsilon_i}\ge 1$)
suggest to take
$$
\cst{hauteurgamma} =
\sum _{i=1}^r \log\house{\epsilon_i}.
$$

The following result, Lemma $\ref{Lemme:PlongementAdapte}$, is a variant of Lemma 5.1 of \cite{ST}. It shows that the two inequalities of ($\ref{Equation:EstimationsTriviales}$) are optimal. They make $C$ appear as an upper bound, while the two inequalities of the conclusion of Lemma $\ref{Lemme:PlongementAdapte}$
 make $C$ appear as a lower bound.

\begin{lemme}\label{Lemme:PlongementAdapte}
There exists an effectively computable positive constant $\Newcst{kappa:PlongementAdapte}$, depending only on $\epsilon_1,\ldots,\epsilon_r$,
 with the following property. If $c_1,\ldots, c_r$ are rational integers and if we let
$$
\gamma= \epsilon_1^{c_1}\cdots \epsilon_r^{c_r},\quad C=\max\{
 |c_1|,\ldots,|c_r|\},
$$
then there exist two embeddings $\sigma$ and $\tau$ of $K$ into $\C$ such that 
$$
|\sigma(\gamma) |\ge e^{\cst{kappa:PlongementAdapte}C }
\quad\hbox{and}\quad
|\tau(\gamma) |\le e^{-\cst{kappa:PlongementAdapte} C}.
$$
\end{lemme}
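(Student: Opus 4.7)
The plan is to use Dirichlet's logarithmic embedding. Denoting by $\sigma_1,\ldots,\sigma_d$ the embeddings of $K$ into $\C$, I would define, for a unit $u\in\ZK^\times$, the vector $L(u)=(\log|\sigma_1(u)|,\ldots,\log|\sigma_d(u)|)\in\R^d$. Since $\gamma=\epsilon_1^{c_1}\cdots\epsilon_r^{c_r}$, one has $L(\gamma)=c_1L(\epsilon_1)+\cdots+c_rL(\epsilon_r)$. By Dirichlet's unit theorem, the vectors $L(\epsilon_1),\ldots,L(\epsilon_r)$ are $\R$-linearly independent; they span the linear subspace $H\subset\R^d$ cut out by the hyperplane $\sum_j x_j=0$ (reflecting $|\rmN_{K/\Q}(u)|=1$) together with the symmetry relations $x_i=x_j$ for complex conjugate embeddings, and $\dim_\R H=r$.

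Next, the linear map $\psi:\R^r\to H$ sending $(c_1,\ldots,c_r)$ to $\sum_i c_iL(\epsilon_i)$ is a bijection between two $r$-dimensional real vector spaces. By equivalence of norms in finite dimension, there exists a positive constant $c_0$, depending only on the matrix whose columns are the $L(\epsilon_i)$ (hence only on the basis $\epsilon_1,\ldots,\epsilon_r$), such that
$$
\|L(\gamma)\|_\infty \;=\; \|\psi(c_1,\ldots,c_r)\|_\infty \;\ge\; c_0\max_i|c_i| \;=\; c_0C.
$$
Consequently there exists an index $i_0$ with $\bigl|\log|\sigma_{i_0}(\gamma)|\bigr|\ge c_0C$, and the constant $c_0$ is effectively computable once the basis is given (via an explicit inverse of the period matrix of the regulator).

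Finally, I would exploit the constraint $L(\gamma)\in H$, namely $\sum_{j=1}^d\log|\sigma_j(\gamma)|=0$. If $\log|\sigma_{i_0}(\gamma)|\ge c_0C$, I set $\sigma=\sigma_{i_0}$; then some other coordinate must be at most $-c_0C/(d-1)$, for otherwise the full sum would be strictly positive, and the embedding attaining this minimum furnishes $\tau$. The symmetric case $\log|\sigma_{i_0}(\gamma)|\le -c_0C$ is handled analogously, swapping the roles of $\sigma$ and $\tau$. Taking $\cst{kappa:PlongementAdapte}=c_0/(d-1)$ then delivers the two embeddings with the required bounds simultaneously.

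The main obstacle is conceptual rather than technical: one has to recognise that the unit-basis hypothesis supplies exactly the linear-algebra ingredient needed, a basis of the log-hyperplane and the resulting equivalence of norms, and that the vanishing of the trace $\sum_j\log|\sigma_j(\gamma)|$ is precisely what forces a large positive logarithm to be mirrored by a large negative one, so the upper and lower bounds can be produced in tandem rather than separately.
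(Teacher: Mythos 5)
Your argument is correct and proceeds by essentially the same route as the paper: both invert the rank-$r$ matrix $(\log|\varphi(\epsilon_i)|)$ of the unit basis (your equivalence of norms via the bijection $\psi$ is the same linear-algebraic step) to obtain $\|L(\gamma)\|_\infty\ge c_0 C$, and both then exploit $\sum_j\log|\sigma_j(\gamma)|=0$ to pass from a single large logarithm to the two embeddings with the required opposite-sign bounds. The only cosmetic difference is that the paper applies the resulting one-sided bound once to $\gamma$ and once to $\gamma^{-1}$, whereas you invoke the trace relation directly to produce $\sigma$ and $\tau$ simultaneously.
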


Obviously, $\cst{kappa:PlongementAdapte}\leq \cst{hauteurgamma}$.

\begin{proof}[\indent Proof]
For any embedding $\varphi$ of $K$ into $\C$, we have
$$
\log |\varphi(\gamma)| = c_1 \log |\varphi(\epsilon_1)|+\cdots + c_r\log |\varphi(\epsilon_r)|.
$$
The matrix $\bigl(\log |\varphi(\epsilon_i)|\bigr)$,
with $d$ lines and $r$ columns,
where the indices for the lines are the $\varphi$'s of $\Phi$ and the indices for  the columns
are the $j$'s of the set $\{ 1, 2, \dots , r \}$, has rank $r$. Therefore we can write
$c_1,\ldots,c_r$ as linear combinations of the elements $\log |\varphi(\gamma) |$, whose coefficients in absolute values have upper bounds which
are functions of the regulator of $K$. Hence
$$
 \max\{ |c_1|,\ldots,|c_r|\} \le \Newcst{kappa:PlongementAdapte3} \max_{\varphi\in \Phi} \bigl(\log |\varphi(\gamma)| \bigr).
$$

Let us take for $\sigma$ an element of $\Phi $ such that $|\sigma(\gamma)|$ be maximal:
$$
|\sigma(\gamma)|= \max_{\varphi\in \Phi} |\varphi(\gamma)|.
$$
This leads to
$$
C\le \cst{kappa:PlongementAdapte3} \log |\sigma(\gamma)|,
$$
providing the conclusion for $\sigma$ with $ \cst{kappa:PlongementAdapte}= \cst{kappa:PlongementAdapte3}^{-1}$.
\medskip

We next apply this result to
$$
\gamma^{-1}=\epsilon_1^{-c_1}\cdots \epsilon_r^{-c_r}.
$$
If $\tau$ is an element of $\Phi $ such that $|\tau(\gamma)|$ be minimal, namely
$$
|\tau(\gamma)|= \min_{\varphi\in \Phi} |\varphi(\gamma)|,
$$
then we have
$$
C\le \cst{kappa:PlongementAdapte3} \log |\tau\left(\gamma^{-1}\right)|.
$$  \end{proof}

\indent
{\bf Remark}.
Under the hypotheses of Lemma $\ref{Lemme:PlongementAdapte}$, if $\gamma_0$ is a nonzero element of $K$ and if we let
$\gamma_1=\gamma_0\gamma$, we deduce
$$
e^{-\cst{hauteurgamma} C }\le
\min_{\varphi\in\Phi} \left| \varphi\left(|\gamma_0| ^{-1} \right) \varphi(\gamma_1) \right|
\le
 e^{-\cst{kappa:PlongementAdapte} C }
$$
and
$$
e^{\cst{kappa:PlongementAdapte}C }
\le
\max_{\varphi\in\Phi}\left| \varphi\left(|\gamma_0| ^{-1} \right) \varphi(\gamma_1) \right|
\le
e^{ \cst{hauteurgamma} C }.
$$

To benefit from these inequalities, we will use the estimates
$$
 |\varphi(\gamma_0)|\le \house{\gamma_0}\le e^{d\rmh(\gamma_0)}
\quad
\hbox{and}
\quad
\rmh\left(\gamma_0^{-1}\right)=\rmh(\gamma_0)
$$
from which we deduce
$$
e^{-\cst{hauteurgamma} C -d\rmh(\gamma_0)}\le
\min_{\varphi\in\Phi} | \varphi(\gamma_1)|
\le
 e^{-\cst{kappa:PlongementAdapte} C +d \rmh(\gamma_0)}
$$
and
$$
e^{\cst{kappa:PlongementAdapte}C -d\rmh(\gamma_0) }
\le
\max_{\varphi\in\Phi} |\varphi(\gamma_1)|
\le
e^{ \cst{hauteurgamma} C +d\rmh(\gamma_0) }.
$$
The term $d\rmh(\gamma_0)$ always appears as an upper bound, and as an error term.
Note also that the constant $\cst{hauteurgamma} $ is large (it appears in the upper bounds), while the constant $\cst{kappa:PlongementAdapte}$ is small (it comes into play in some lower bounds).

\subsection{Upper bound
involving the norm
} \label{SS:norme}

Given an algebraic number $\gamma$ of degree $\leq d$ and norm $\leq m$, there exists a unit $\varepsilon$ in the field $\Q(\alpha)$ such that the conjugates of $\varepsilon\gamma$ are bounded from above by a constant times $m^{1/d}$.
This is a consequence of Lemma A.15 of \cite{ST}, a result which we want to state; (this is also Lemma 2 of \cite{LW2}).

 \begin{lemme}\label{Lemme: LemmaA.15ST}
Let $K$ be an algebraic number field of degree $d$ with regulator $R$ and let $\gamma$ be a nonzero element of $\ZK$ whose
norm in absolute value is $m$. Then there exists a unit $\varepsilon\in\ZK^\times$ such that
 $$
 \frac{1}{R}
 \max_{1\le j\le d}
 \left|
 \log (m^{-1/d}
 |\sigma_j(\varepsilon\gamma)|)
 \right|
 $$
 is bounded  from above by an effectively computable constant depending only on $d$.
 \end{lemme}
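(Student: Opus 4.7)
The plan is to interpret the statement geometrically through the logarithmic embedding and apply Minkowski's theorem on successive minima to the unit lattice. Introduce the map $L:K^\times\to\R^d$ by $L(\xi)=(\log|\sigma_1(\xi)|,\ldots,\log|\sigma_d(\xi)|)$. The image $\Lambda:=L(\ZK^\times)$ is a lattice of full rank $r$ inside the hyperplane $H=\{x\in\R^d:x_1+\cdots+x_d=0\}$, and its covolume equals the regulator $R$ up to a factor depending only on $d$. Since $|\rmN_{K/\Q}(\gamma)|=m$, the vector $w\in\R^d$ with coordinates $w_j=\log|\sigma_j(\gamma)|-(\log m)/d$ also lies in $H$. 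Finding a unit $\varepsilon\in\ZK^\times$ that satisfies the conclusion is equivalent to finding a lattice point $L(\varepsilon^{-1})\in\Lambda$ within $\ell^\infty$-distance $c(d)\,R$ of $w$, since $w-L(\varepsilon^{-1})=L(\varepsilon\gamma)-(\log m/d)\mathbf{1}$ is precisely the vector whose sup-norm we wish to bound.

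The task thus reduces to estimating the covering radius $\rho$ of $\Lambda$ inside $H$. I would apply Minkowski's second theorem: the successive minima $\mu_1\le\mu_2\le\cdots\le\mu_r$ of $\Lambda$ satisfy $\mu_1\mu_2\cdots\mu_r\le C_1(d)\,R$, and a Minkowski-reduced basis of $\Lambda$ can be chosen whose basis vectors have Euclidean norms comparable to the $\mu_i$, so that $\rho\le C_2(d)\,\mu_r$. Together these give $\mu_r\le C_1(d)\,R/\mu_1^{r-1}$, and it remains only to secure a positive lower bound $\mu_1\ge M(d)>0$ depending only on $d$ to conclude $\rho\le c(d)\,R$.

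The main obstacle is this lower bound on $\mu_1$, which is height-theoretic rather than geometric. Indeed, $\mu_1$ is the minimum of $\|L(\eta)\|$ over non-torsion units $\eta\in\ZK^\times$, and $\|L(\eta)\|$ is equivalent, with constants depending only on $d$, to the absolute logarithmic Weil height $\rmh(\eta)$. Any effective form of Kronecker's theorem in bounded degree, such as the classical results of Blanksby--Montgomery, Dobrowolski, or Voutier, provides $\rmh(\eta)\ge M(d)>0$ for every non-root-of-unity algebraic integer of degree $\le d$, which is exactly the input needed. Feeding this back into the previous step, a lattice point $L(\varepsilon^{-1})$ exists within distance $c(d)\,R$ of $w$, and the corresponding unit $\varepsilon$ satisfies $\max_j\bigl|\log|\sigma_j(\varepsilon\gamma)|-(\log m)/d\bigr|\le c(d)\,R$, which is the stated bound after division by $R$.
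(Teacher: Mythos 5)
The paper does not prove this lemma; it is quoted verbatim from Lemma A.15 of Shorey--Tijdeman (equivalently Lemma~2 of \cite{LW2}), so there is no in-text proof to compare against. Your argument is the standard geometry-of-numbers proof of that classical result and is correct in substance: reduce to bounding, by $c(d)R$, the $\ell^\infty$-covering radius of the unit lattice, then control that covering radius through Minkowski's second theorem together with a uniform lower bound on the first successive minimum coming from an effective Kronecker/Lehmer-type bound for non-torsion units of degree at most $d$.

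Two small points deserve tightening. First, the image $\Lambda=L(\ZK^\times)$ has rank $r=r_1+r_2-1$, so it is \emph{not} of full rank inside the $(d-1)$-dimensional hyperplane $H=\{\sum x_j=0\}$ except when $K$ is totally real. What saves the argument is that both $\Lambda$ and your target vector $w$ lie in the smaller conjugation-invariant subspace $V=\{x\in H:\ x_j=x_k\ \text{whenever}\ \sigma_j=\overline{\sigma_k}\}$, which has dimension exactly $r$ (because $|\sigma_j(\gamma)|=|\overline{\sigma_j}(\gamma)|$ and likewise for units); it is there that $\Lambda$ is full rank and the covering-radius argument must be run. You should say this, since otherwise the covering radius of $\Lambda$ relative to $H$ would be infinite and the argument would be vacuous. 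Second, invoking Blanksby--Montgomery or Dobrowolski is heavier machinery than needed: since the bound is allowed to depend on $d$, the positivity of $\mu_1$ already follows from Kronecker's theorem together with effective Northcott in bounded degree (the monic minimal polynomial of a non-torsion unit of degree $\le d$ and height $\le T$ has integer coefficients bounded in terms of $d$ and $T$, so there are only finitely many, effectively enumerable, and none is a root of unity). Either route is valid, but the simpler one is worth knowing here. With those caveats, the argument is sound and gives exactly the claimed conclusion $\max_j\bigl|\log(m^{-1/d}|\sigma_j(\varepsilon\gamma)|)\bigr|\le c(d)R$.
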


{\bf Remark.} The unit $\varepsilon\in\ZK^\times$ may come from the group generated by the basis of the unit group modulo the torsion.
In other words, the torsion elements do not come into play because of the absolute values which appear in $|\sigma_j(\varepsilon\gamma)|$.
\medskip

We use this lemma in the same way we did in \cite{LW2}: there exists an effectively computable constant $\Newcst{LemmaA.15ST}$, which is a function of
 $d$ and $R$, such that, if $\gamma$ is a nonzero element of $\ZK$ whose norm has an absolute value $\leq m$, then there exists a unit $\varepsilon\in\ZK^\times$ such that
$$
 \max_{1\le j\le d}
 \left|
 \sigma_j(\varepsilon\gamma)
 \right|
 \le
 \cst{LemmaA.15ST}
m^{1/d}.
$$
We will suppose $m\geq 2$ and we will rather use the weaker upper bound
\begin{equation}\label{Equation:lemmeA.15ST}
 \max_{1\le j\le d}
 \left|
 \sigma_j(\varepsilon\gamma)
 \right|
 \le
m^{ \cst{LemmaA.15STbis}}
 \end{equation}
 with $\Newcst{LemmaA.15STbis}$, an effectively computable constant which can be calculated as a function of $d$ and $R$.

\subsection{Diophantine tools}\label{SS:outilsdiophantiens}

 \mbox{} \indent
In this section, we are given two positive integers $s$ and $D$; the constants $\Newcst{kappa:FLL}$, $\Newcst{kappa:FLL1}$ and $\Newcst{kappa:FLL2}$ depend only on $s$ and $D$ and are effectively computable.
\medskip

Here are the hypotheses and notations common to Proposition $\ref{Proposition:FormeLineaireLogarithmes}$ and to Corollaries $\ref{Cor:MinorationCLL1}$ and $\ref{Cor:MinorationCLL2}$.
Let $\gamma_1,\ldots, \gamma_s$ be algebraic numbers in an algebraic number field of degree $\leq D$. Let $c_1,\ldots,c_s$ be
rational integers. Suppose $\gamma_1^{c_1}\cdots \gamma_s^{c_s} \not=1$. Moreover, let $H_1,\ldots,H_s$ be real numbers $\geq 1$ which satisfy
$$
H_j\ge \rmh(\gamma_j) \quad (1\le j\le s).
$$
We will use the following particular case of Theorem 9.1 of \cite{GL326} (see Proposition 9.21 of \cite{GL326}).

\begin{proposition}\label{Proposition:FormeLineaireLogarithmes}
Let $\lambda_1,\dots,\lambda_s$ be complex numbers such that   
$\gamma_j=e^{\lambda_j}$  for all  $ j\in\{ 1, \dots ,s\}$. Put
$$
\Lambda=c_1\lambda_1+\cdots+c_s\lambda_s.
$$
Suppose $H_j\ge e^{|\lambda_j|}$ for $j=1,\dots,s$. Let $C_0$ be a real number satisfying
$$
 C_0\ge 2, \quad
C_0\ge
\max_{1\le j<s}
\left\{
\frac{|c_s|}{ H_j}+
\frac{|c_j|}{ H_s}\right\}
\cdotp
$$
Then
$$
|\Lambda|\ge\exp\bigl\{- \cst{kappa:FLL} H_1 \cdots
H_s \log C_0 \bigr\}.
$$
\end{proposition}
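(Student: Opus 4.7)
\indent The statement is advertised as a particular case of Theorem 9.1 of \cite{GL326}, so my plan is not to reprove the full Baker--Matveev type lower bound but to check that the hypotheses of the cited theorem are satisfied with our data and to read off the conclusion. In other words, I expect the ``proof'' to be essentially a parameter--matching exercise.

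First I would recall the shape of the general result in \cite{GL326}: it provides a nonzero linear form $\Lambda$ in logarithms of algebraic numbers with a lower bound of the form
$$
|\Lambda|\ge \exp\bigl\{-c(s,D)\, V_1\cdots V_s\, \log E\bigr\},
$$
where the $V_j$ are parameters dominating simultaneously the absolute logarithmic heights $\rmh(\gamma_j)$ and the quantities $|\lambda_j|$ (so that a definite determination of the logarithm is fixed), and where $E$ is a real number measuring the size of the coefficient vector $(c_1,\dots,c_s)$ in a refined way that does not simply reduce to $\max_j |c_j|$.

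Next I would match the data. Taking $V_j=H_j$ is permitted by the two hypotheses $H_j\ge \rmh(\gamma_j)$ and $H_j\ge e^{|\lambda_j|}$, the second of which gives $\log H_j\ge |\lambda_j|$; the nonvanishing assumption $\gamma_1^{c_1}\cdots\gamma_s^{c_s}\ne 1$ translates at once to $\Lambda\ne 0$, as required. For the coefficient parameter, I would set $E=C_0$: the condition $C_0\ge 2$ guarantees $\log C_0>0$ so that the conclusion is meaningful, while the inequality
$$
C_0\ge \max_{1\le j<s}\left\{\frac{|c_s|}{H_j}+\frac{|c_j|}{H_s}\right\}
$$
is exactly the refined height--coefficient hypothesis appearing in Proposition 9.21 of \cite{GL326}. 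Absorbing the explicit dependence $c(s,D)$ into the constant $\cst{kappa:FLL}$ then yields the claim.

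The main obstacle I anticipate is the asymmetric role of the last index $s$ in the hypothesis on $C_0$. The cited theorem is formulated so as to single out one ``distinguished'' variable, and the strength of the bound lies precisely in allowing $C_0$ to be substantially smaller than $\max_j |c_j|$ when the heights $H_j$ are unbalanced, a feature which will be crucial for the application to Thue equations in the sequel. The care needed is therefore purely combinatorial: verify that the ordering of $(\gamma_1,\dots,\gamma_s)$ we use is consistent with the ordering in \cite{GL326}, and that the quantity we call $C_0$ satisfies the inequality required of the analogous quantity there. Once this bookkeeping is done, Proposition \ref{Proposition:FormeLineaireLogarithmes} follows immediately from the cited theorem.
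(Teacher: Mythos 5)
Your proposal matches the paper exactly: the paper offers no proof at all, stating only that the Proposition is ``the following particular case of Theorem 9.1 of \cite{GL326} (see Proposition 9.21 of \cite{GL326})'', which is precisely the citation-plus-parameter-matching you describe. Your observations about the choice $V_j=H_j$, the role of the nonvanishing hypothesis, and the asymmetric treatment of the index $s$ in the bound $C_0$ correctly capture why the stated hypotheses line up with those of the cited result.
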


Note that the hypothesis $\gamma_1^{c_1}\cdots \gamma_s^{c_s} \not=1$ implies $\Lambda \not=0$.
 Proposition $\ref{Proposition:FormeLineaireLogarithmes}$ will be used via the following corollary.

 \begin{corollaire}\label{Cor:MinorationCLL1}
Suppose $H_j\le H_s$ for $1\le j\le s$.
Let $C_1$ a real number satisfying
$$
C_1\ge 2, \quad
C_1\ge
\max_{1\le j\le s} \left\{
 \frac{H_j}{H_s} |c_j|\right\}.
$$
Then
$$
|\gamma_1^{c_1}\cdots \gamma_s^{c_s}-1|>
\exp\{-
\cst{kappa:FLL1} H_1\cdots H_s\log C_1\}.
$$
 \end{corollaire}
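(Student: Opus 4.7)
The plan is to deduce Corollary \ref{Cor:MinorationCLL1} from Proposition \ref{Proposition:FormeLineaireLogarithmes} through the standard reduction which compares $|e^{\Lambda}-1|$ with $|\Lambda|$ for an appropriate linear form in logarithms.

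First I would dispose of the easy case: if $|\gamma_1^{c_1}\cdots\gamma_s^{c_s}-1|\ge 1/2$, the desired inequality holds for any $\cst{kappa:FLL1}$ chosen large enough, so one may assume the product $\gamma_1^{c_1}\cdots\gamma_s^{c_s}$ lies within $1/2$ of $1$. In that case I choose determinations $\lambda_j$ of $\log\gamma_j$ so that $\Lambda := c_1\lambda_1+\cdots+c_s\lambda_s$ equals the principal branch of $\log(\gamma_1^{c_1}\cdots\gamma_s^{c_s})$. The elementary inequality $|\log(1+z)|\le 2|z|$ for $|z|\le 1/2$ then yields
$$
|\Lambda|\le 2\bigl|\gamma_1^{c_1}\cdots\gamma_s^{c_s}-1\bigr|,
$$
so the problem reduces to a lower bound for $|\Lambda|$ of the shape claimed.

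Next I would apply the Proposition to this $\Lambda$. Its hypothesis $H_j\ge e^{|\lambda_j|}$ is accommodated by enlarging each $H_j$ by a bounded multiplicative factor depending only on $s$ and $D$: since $|\lambda_j|\le \pi+|\log|\gamma_j||\le \pi+D\,\rmh(\gamma_j)\le \pi+DH_j$, the replacement $H_j\mapsto(\pi+D)H_j$ only inflates the product $H_1\cdots H_s$ by $(\pi+D)^s$, a constant that can be absorbed. To translate the hypothesis on $C_1$ into one on $C_0$, I use $|c_s|\le C_1$, $|c_j|H_j/H_s\le C_1$ and $H_j\ge 1$, obtaining
$$
\frac{|c_s|}{H_j}+\frac{|c_j|}{H_s}\le \frac{C_1}{H_j}+\frac{C_1}{H_j}=\frac{2C_1}{H_j}\le 2C_1.
$$
Hence one may take $C_0=2C_1$, which gives $\log C_0\le 2\log C_1$ since $C_1\ge 2$. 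The Proposition then delivers $|\Lambda|\ge \exp\{-\cst{kappa:FLL}H_1\cdots H_s\log C_0\}$, and combining with $|\Lambda|\le 2|\gamma_1^{c_1}\cdots\gamma_s^{c_s}-1|$ produces the corollary with a constant $\cst{kappa:FLL1}$ that is a suitable multiple of $\cst{kappa:FLL}$.

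The main obstacle is essentially bookkeeping: tracking the enlargement $H_j\mapsto(\pi+D)H_j$ needed to satisfy the Proposition's hypothesis on $|\lambda_j|$, and the calibration between $C_0$ and $C_1$. Neither step changes the qualitative form of the bound, so the deduction is routine once the right comparisons are in place.
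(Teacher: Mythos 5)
The key step in your argument is the claim that one can choose determinations $\lambda_j$ of $\log\gamma_j$ so that $c_1\lambda_1+\cdots+c_s\lambda_s$ equals the principal branch of $\log(\gamma_1^{c_1}\cdots\gamma_s^{c_s})$. This is false in general. Shifting $\lambda_j$ by $2\pi i k_j$ shifts $\sum c_j\lambda_j$ by $2\pi i\sum c_j k_j$, so the achievable values of $\sum c_j\lambda_j$ modulo $2\pi i\Z$ are exactly a coset of $2\pi i\,\gcd(c_1,\dots,c_s)\Z$; there is no reason the principal value should lie in that coset. For a concrete counterexample, take $\gamma_1=-1$, $\gamma_2=e^{i\pi/4}$, $c_1=c_2=2$, so that $\gamma_1^{c_1}\gamma_2^{c_2}=i\ne 1$; any choice of $\lambda_1,\lambda_2$ gives $2\lambda_1+2\lambda_2=i\pi(4k+4l+5/2)$, which never equals $i\pi/2$. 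Even when the congruence is solvable, the integers $k_j$ forced on you can be unboundedly large, so $|\lambda_j|$ is no longer controlled by $D H_j+O(1)$ and the inflation of the $H_j$'s is no longer bounded.

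The paper sidesteps this by \emph{not} trying to realize the principal logarithm $\Lambda$ as a combination of the fixed determinations $\lambda_j$. It takes each $\lambda_j$ to be the principal determination of $\log\gamma_j$ (so $|\lambda_j|$ stays small), sets $\Lambda_0=\sum c_j\lambda_j$, and observes $\Lambda=\Lambda_0+2\pi i c_0$ for some integer $c_0$. It then applies Proposition \ref{Proposition:FormeLineaireLogarithmes} to the linear form in $s+1$ logarithms $c_0\lambda_0+c_1\lambda_1+\cdots+c_s\lambda_s$ with $\lambda_0=2\pi i$, $\gamma_0=1$, $H_0=2\pi$, and bounds $|c_0|$ via $2\pi|c_0|\le 1+\sum_j(DH_j+2\pi)|c_j|$, which feeds into the choice $C_0=4sDC_1$. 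This auxiliary coefficient $c_0$ is exactly the ingredient your argument is missing; without it, the reduction to the Proposition does not go through.

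A secondary point: the Proposition's hypothesis relates $H_j$ to $|\lambda_j|$, and you propose to satisfy it by replacing $H_j$ with $(\pi+D)H_j$ on the strength of $|\lambda_j|\le\pi+DH_j$. As written in the paper, the hypothesis reads $H_j\ge e^{|\lambda_j|}$, which such a linear rescaling would not satisfy; both the paper's proof and yours treat the condition as if it were a linear bound on $|\lambda_j|$ in terms of $H_j$, so this is presumably a misprint in the Proposition rather than an error on your part, but it is worth flagging. In any case this bookkeeping point is minor next to the missing $c_0$ term.
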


\indent \begin{proof}[\indent Proof]
Under the hypotheses of Corollary $\ref{Cor:MinorationCLL1}$, we choose an embedding of the algebraic number field
$K=\Q(\gamma_1,\dots,\gamma_s)$ into $\C$. From the definition of the absolute logarithmic height $\rmh$,
we deduce $|\gamma_j|\le e^{D \rmh(\gamma_j)}$ for $j=1,\dots,s$. We introduce some real numbers
$\nu_1,\dots,\nu_s$ via the conditions
$$
\gamma_j=|\gamma_j| e^{2i\pi \nu_j}, \quad -1<\nu_j\le 1 \qquad (1\le j\le s)
$$
and we put
$$
\lambda_j=\log |\gamma_j|+2i\pi \nu_j \qquad (1\le j\le s).
$$
In other words, $\lambda_j$ is the main determination of the logarithm of $\gamma_j$. In particular,
 $e^{\lambda_j}=\gamma_j$ and $|\lambda_j|\le DH_j+2\pi$ ($1\le j\leq s$).
Write
$$
\Lambda_0=c_1\lambda_1+\cdots+c_s\lambda_s.
$$
So
$$
e^{\Lambda_0}=\gamma_1^{c_1}\cdots \gamma_s^{c_s}\not=1.
$$
Note also $\Lambda$ the main determination of the logarithm of $\gamma_1^{c_1}\cdots \gamma_s^{c_s}$. So there exists
$c_0\in\Z$ such that $\Lambda=2i\pi c_0+\Lambda_0$.
We may suppose $|e^{\Lambda}-1|<1/2$; otherwise, the conclusion of Corollary $\ref{Cor:MinorationCLL1}$ is trivial; we deduce (for
instance, see Lemma 3 of \cite{LW2})
$$
|\Lambda|\le 2 |\gamma_1^{c_1}\cdots \gamma_s^{c_s}-1|.
$$
In the same vein, write
$\lambda_0=2i\pi$, $\gamma_0=1$, $H_0=2\pi$, so
$$
\Lambda=c_0\lambda_0+c_1\lambda_1+\cdots+c_s\lambda_s.
$$
From the upper bound $|\Lambda|\le 1$, we deduce
$$
2\pi |c_0|\le 1+\sum_{j=1}^s |c_j \lambda_j|\le 1+\sum_{j=1}^s ( DH_j+2\pi) |c_j |.
$$
Therefore the inequalities
$$
C_1\ge
 \frac{H_j}{H_s} |c_j|,
 \quad H_j\ge 1 \qquad (1\le j\le s)
$$
and
$$
\frac{|c_0|}{H_s} \le \left( \frac{1}{2\pi}+\frac{sD}{2\pi} +s\right) C_1 \le 2s D C_1
$$
allow us to use Proposition $\ref{Proposition:FormeLineaireLogarithmes}$ with $s$ replaced by $s+1$ and
 $4sDC_1$ playing the role of $C_0$. This concludes the proof.
\end{proof}

The following particular case of Corollary $\ref{Cor:MinorationCLL1}$ is also deduced from Corollary 9.22 of \cite{GL326}.

 \begin{corollaire}\label{Cor:MinorationCLL2}
Let $C_2$ be a real number satisfying
$$
C_2\geq\max \{ 2, |c_1|,\ldots, |c_s| \}.
$$
Then
$$
|\gamma_1^{c_1}\cdots \gamma_s^{c_s}-1|>
\exp\{-
\cst{kappa:FLL2} H_1\cdots H_s\log C_2\}.
$$
 \end{corollaire}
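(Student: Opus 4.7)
The plan is to reduce Corollary \ref{Cor:MinorationCLL2} to Corollary \ref{Cor:MinorationCLL1}. The key observation is that the hypothesis of Corollary \ref{Cor:MinorationCLL2}, namely $C_2 \geq |c_j|$ for all $j$, is stronger than the weighted hypothesis $C_1 \geq (H_j/H_s)|c_j|$ required in Corollary \ref{Cor:MinorationCLL1}, provided we arrange for $H_s$ to be the largest of the $H_j$.

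First I would permute the indices $1,\dots,s$ so that $H_s=\max_{1\leq j\leq s}H_j$. This is legitimate because all the relevant quantities are symmetric functions of the triples $(\gamma_j,c_j,H_j)$: the product $H_1\cdots H_s$, the quantity $\gamma_1^{c_1}\cdots\gamma_s^{c_s}$, and the non-degeneracy hypothesis $\gamma_1^{c_1}\cdots\gamma_s^{c_s}\neq 1$ are all invariant under such a relabelling. After this normalization, the extra hypothesis $H_j\leq H_s$ of Corollary \ref{Cor:MinorationCLL1} holds automatically.

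Next I would take $C_1:=C_2$ and verify the hypotheses of Corollary \ref{Cor:MinorationCLL1}. The condition $C_2\geq 2$ is immediate from the assumption, and for each $j\in\{1,\dots,s\}$,
$$
\frac{H_j}{H_s}\,|c_j|\;\leq\; |c_j|\;\leq\; C_2,
$$
since $H_j\leq H_s$. Thus Corollary \ref{Cor:MinorationCLL1} applies and yields the desired lower bound, with $\cst{kappa:FLL2}$ chosen equal to $\cst{kappa:FLL1}$. There is essentially no obstacle here; the whole argument is the observation that the weighted condition in Corollary \ref{Cor:MinorationCLL1} becomes free once the indices have been ordered so that $H_s$ is maximal among $H_1,\dots,H_s$.
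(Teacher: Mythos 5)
Your reduction is correct, and it matches the paper's intent: the paper simply labels Corollary \ref{Cor:MinorationCLL2} a ``particular case'' of Corollary \ref{Cor:MinorationCLL1} (and also points to Corollary~9.22 of \cite{GL326}) without writing out the reduction, while you supply the missing detail — permute the triples $(\gamma_j,c_j,H_j)$ so that $H_s$ is maximal, note that all hypotheses and the bound $H_1\cdots H_s$ are symmetric under this relabelling, and then observe $\frac{H_j}{H_s}|c_j|\le |c_j|\le C_2$ so that $C_1:=C_2$ works, giving $\cst{kappa:FLL2}=\cst{kappa:FLL1}$ (legitimate, since both constants are allowed to depend only on $s$ and $D$, which the permutation leaves unchanged).
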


\section{The reciprocal polynomial}\label{S:PolynomeReciproque}

 \mbox{} \indent
\mbox{} \indent Let us show that there is no restriction, for the statement of Theorem $\ref{theoreme:principal}$, to suppose $|x|\leq |y|$.
 Suppose that Theorem $\ref{theoreme:principal}$ is proved in the particular case $|x|\leq |y|$ with the constant $\cst{kappa1}$ replaced by
 $\Newcst{kappa1bis}(\alpha)$. Suppose now that we are under the hypotheses of Theorem $\ref{theoreme:principal}$ and consider a solution $(x,y,\varepsilon)\in\Z^2\times\ZK^\times$ of the inequality $| F_\varepsilon(x,y)|\le m$ with $xy\neq 0$ and $|x|> |y|$. Write $\alpha'= \alpha^{-1}$; let $g\in\Z[Y]$ be the irreducible polynomial of $\alpha'$:
$$
g(Y)=Y^df\left(Y^{-1}\right)=a_d\prod_{i=1}^d \bigl(Y-\sigma_i(\alpha')\bigr). 
$$
Put
$$
 \varepsilon'= \varepsilon^{-1}, \quad g_{\varepsilon'}(Y)=a_d\prod_{i=1}^d \bigl(Y-\sigma_i(\alpha'\varepsilon')\bigr),
\quad
G_{\varepsilon'}(Y,X)=X^dg_{\varepsilon'}(Y/X),
$$
so that
$$
G_{\varepsilon'}(Y,X)=F_\varepsilon (X,Y).
$$
Consequently, $(y,x,\varepsilon')$ is a solution of the Thue inequality
$$
|G_{\varepsilon'} (y,x)|\le m.
$$
Since $\rmh\bigl((\alpha\varepsilon)^{-1}\bigr)=\rmh(\alpha\varepsilon)$, we deduce from Theorem $\ref{theoreme:principal}$
applied to $G_{\varepsilon}$, with the condition $|y|<|x|$,
$$
\max\{|x|,\; e^{\rmh(\alpha\varepsilon)}\}\le m^{\cst{kappa1bis}(\alpha^{-1})},
$$
which allows us to draw the conclusion, by taking $\cst{kappa1}=\max\{\cst{kappa1bis}(\alpha), \cst{kappa1bis}(\alpha^{-1})\}$.
\medskip

From now on, we fix an element $(x,y,\varepsilon)\in \calE$ with $|x|\leq |y|$. The constants $ \cst{majhauteuralphaepsilon}, \cst{minhauteuralphaepsilon},\dots,\cst{kappafinal}$ which will appear, depend only on $\rmh(\alpha)$, $d$ and $m$;
they are easy to calculate explicitly.

\section{Introduction of the parameters \boldmath$\Atilde$, $A$, $\Btilde$, $B$}\label{S:parametres}

 \mbox{} \indent Put
 $$
 \Atilde=\max\bigl\{1, \rmh(\alpha\varepsilon)\bigr\}.
 $$
Write
$$
\varepsilon=\zeta \epsilon_1^{a_1}\cdots\epsilon_r^{a_r}
$$
with $\zeta\in K^\times_{\tors}$ and $a_i\in\Z$ for $1\le i\le r$, and put
$$
A=\max\bigl\{1,|a_1|, \dots,|a_r|\bigr\}.
$$
The upper bound
$$
\rmh(\alpha\varepsilon)\le \Newcst{majhauteuralphaepsilon} A
$$
follows from ($\ref{Equation:EstimationsTriviales}$); more precisely, we may take
$$
\cst{majhauteuralphaepsilon} =\max\bigl\{1,\; \rmh(\alpha)+\rmh(\epsilon_1)+\cdots + \rmh(\epsilon_r)\bigr\}.
$$
 The lower bound
$$
\rmh(\alpha\varepsilon)\ge \Newcst{minhauteuralphaepsilon} A
$$
follows from Lemma $\ref{Lemme:PlongementAdapte}$. Consequently,
$$
 \cst{minhauteuralphaepsilon} A\le \Atilde\le \cst{majhauteuralphaepsilon} A.
$$

Next put
$$
\beta=x- \alpha\varepsilon y
\quad\hbox{and}
\quad
\Btilde=\max\{1,\rmh(\beta)\}.
 $$
So
\begin{equation}\label{normebeta}
F_\varepsilon(x,y)=
a_0\sigma_1(\beta) \cdots \sigma_d(\beta).
\end{equation}
Since $|F_\varepsilon(x,y)|\le m$, it follows from ($\ref{Equation:lemmeA.15ST}$), ($\ref{normebeta}$) and Lemma $\ref{Lemme: LemmaA.15ST}$
that there exists $\varrho\in\ZK$ satisfying  $\rmh(\varrho)\le \Newcst{ell} \log m$ and such that
$\eta=\beta/\varrho$ belongs to the subgroup of the unit group of $\ZK$ generated by $\epsilon_1,\dots,\epsilon_r$.
Write
 $$
\eta= \epsilon_1^{b_1}\cdots\epsilon_r^{b_r}
$$
with rational numbers $b_1,\ldots,b_r$ and put
$$
B= \max\bigl\{1,\; |b_1|, |b_2|, \ldots, \; |b_r|\}.
$$
With the relation  $\beta=\varrho \eta$, we deduce from ($\ref{Equation:EstimationsTriviales}$),
$$
 \Btilde\le \Newcst{majBtilde} (B +\log m)
$$
and from Lemma $\ref{Lemme:PlongementAdapte}$ we get
$$
B\leq
\Newcst{mminBtilde}(\Btilde +\log m).
$$

By using the hypothesis $xy\not=0$, we verify that the condition $ \Q(\alpha\varepsilon)=K$ which appears in the definition of $\calE$ implies $ \Q(\beta)=\Q(\beta/\alpha\varepsilon)=K$. Consequently, for $\varphi$ and $\sigma$ in $\Phi$, we have
$$
\varphi=\sigma\;
\Longleftrightarrow \;
\varphi(\alpha\varepsilon)=\sigma(\alpha\varepsilon)\;
\Longleftrightarrow \;
\varphi(\beta)=\sigma(\beta)\;
\Longleftrightarrow \;
\sigma(\alpha\varepsilon)\varphi(\beta)=\sigma(\beta)\varphi(\alpha\varepsilon).
$$

\section{Elimination}\label{S:elimination}

\subsection{Expressions of \boldmath$x$ and $y$ in terms of $\alpha\varepsilon$ and $\beta$}\label{SS:xety}

\mbox{} \indent Let $\varphi_1,\varphi_2$ be two distincts elements of $\Phi$, {\it i.e.}, two distinct embeddings of $K$ into $\C$.
 Let us eliminate $x$ (resp. $y$) between the two equations
$$
\varphi_1(\beta)=x-\varphi_1(\alpha\varepsilon)y \quad \hbox{and}\quad
\varphi_2(\beta)=x-\varphi_2(\alpha\varepsilon)y,.
$$
This leads to
\begin{equation}\label{Equation:y}
y=\frac{\varphi_1(\beta)-\varphi_2(\beta)}{\varphi_2(\alpha\varepsilon)-\varphi_1(\alpha\varepsilon)},
\quad
x=\frac{\varphi_1(\beta)\varphi_2(\alpha\varepsilon)-\varphi_2(\beta)\varphi_1(\alpha\varepsilon)}
{\varphi_2(\alpha\varepsilon)-\varphi_1(\alpha\varepsilon)}
\cdotp
\end{equation}

\subsection{A Siegel unit equation}\label{S:Strategie}

\mbox{} \indent Let $\varphi_1,\varphi_2,\varphi_3$ be three elements of $\Phi$, {\it i.e.}, three embeddings
of $K$ into $\C$. Write
$$
u_i=\varphi_i(\alpha\varepsilon),\quad
v_i=\varphi_i(\beta)
\qquad (i=1,2,3).
$$
We can eliminate $x$ and $y$ between the three equations
$$
\left\{
\begin{array}{lll}
\varphi_1(\beta)&=&x-\varphi_1(\alpha\varepsilon)y\\[1mm]
\varphi_2(\beta)&=&x-\varphi_2(\alpha\varepsilon)y \\[1mm]
\varphi_3(\beta)&=&x-\varphi_3(\alpha\varepsilon)y\\
\end{array}
\right.
$$
by writing that the determinant
$$
\left|
\begin{matrix}
1& \varphi_1(\alpha\varepsilon) &\varphi_1(\beta)\\
1& \varphi_2(\alpha\varepsilon) &\varphi_2(\beta)\\
1& \varphi_3(\alpha\varepsilon) &\varphi_3(\beta)\\
\end{matrix}
\right|=
\left|
\begin{matrix}
1&u_1& v_1\\
1&u_2& v_2\\
1&u_3& v_3\\
\end{matrix}
\right|
$$
is equal to $0$, and this leads to the unit equation {\it \`a la Siegel}
\begin{equation}\label{Equation:SommeSixTermes}
u_1v_2-u_1v_3+u_2v_3-u_2v_1+u_3v_1-u_3v_2=0.
\end{equation}

\section{Four privileged embeddings}\label{S:quatrepolongements}

\mbox{} \indent
Let us denote by $\sigma_a$ (resp. $\sigma_b$) an embedding $\varphi$ of $K$ into $\C$ having the property that $|\varphi(\alpha\varepsilon)|$
 (resp. $|\varphi(\beta)|$) be maximal among all the elements $|\varphi(\alpha\varepsilon)|$ (resp. $|\varphi(\beta)|$)
when $\varphi$ runs through all the embeddings of $\Phi$.
Let us denote by $\tau_a$ (resp. $\tau_b$) an embedding $\psi$ of $K$ into $\C$ having the property that $|\psi(\alpha\varepsilon)|$
 (resp. $|\psi(\beta)|$) be minimal among all the elements $|\psi(\alpha\varepsilon)|$ (resp. $|\psi(\beta)|$)
when $\psi$ runs through all the embeddings of $\Phi$.
 We may suppose $\tau_b\not=\sigma_b $, $\tau_b\not=\overline{\sigma_b} $, together with
 $\tau_a \not=\sigma_a$, $\tau_a \not=\overline{\sigma_a}$. \medskip

 These four embeddings will be used in the unit equation in many different ways.

\section{About the parameters \boldmath $A$ and $B$}\label{S:minorationAetB}

\mbox{} \indent
It turns out that if we are under the hypothesis $\max\{A,B\}\le \kappa \log m$, we can easily prove Theorem
 $\ref{theoreme:principal}$.

\subsection{Proof of Theorem \boldmath  $\ref{theoreme:principal}$ if  $\max\{A,B\} \leq \kappa \log m$
}\label{SS:maxABsuffit}

\mbox{} \indent We indeed have a short proof of Theorem  $\ref{theoreme:principal}$ in this case.

\begin{lemme}\label{Lemme:AetBgrands}
If $\kappa$ is a real number such that
$\max\{A,B\}\le \kappa \log m$, then the conclusion of Theorem $\ref{theoreme:principal}$ holds true with a constant $\cst{kappa1}$ depending not only on $\alpha$ but also on $\kappa$.
\end{lemme}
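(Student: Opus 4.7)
The plan is to exploit the hypothesis $\max\{A,B\}\le\kappa\log m$ to force every quantity in sight to be at most a power of $m$, and then to extract the required bounds on $|x|$ and $|y|$ from the elimination identity~($\ref{Equation:y}$).

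First, the bounds of Section~$\ref{S:parametres}$ immediately give $\rmh(\alpha\varepsilon)\le\cst{majhauteuralphaepsilon}A\le\cst{majhauteuralphaepsilon}\kappa\log m$, hence $e^{\rmh(\alpha\varepsilon)}\le m^{c_1}$ (which already takes care of the factor $e^{\rmh(\alpha\varepsilon)}$ in the statement) and $\max_{\varphi\in\Phi}|\varphi(\alpha\varepsilon)|\le e^{d\rmh(\alpha\varepsilon)}\le m^{c_2}$. Similarly, $\Btilde\le\cst{majBtilde}(B+\log m)\le\cst{majBtilde}(\kappa+1)\log m$ yields $\max_{\varphi\in\Phi}|\varphi(\beta)|\le m^{c_3}$. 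The constants $c_1, c_2, c_3$ depend only on $\alpha$ and $\kappa$.

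By Section~$\ref{S:PolynomeReciproque}$, I may assume $|x|\le|y|$, so it suffices to bound $|y|$. For any two distinct embeddings $\varphi_1,\varphi_2\in\Phi$, identity~($\ref{Equation:y}$) gives
$$
|y|\;\le\;\frac{2\max_{\varphi\in\Phi}|\varphi(\beta)|}{|\varphi_2(\alpha\varepsilon)-\varphi_1(\alpha\varepsilon)|}.
$$
The only nonobvious step — and the one I flag as the main obstacle — is choosing the pair so that the denominator is bounded below independently of $m$. I would handle it via a discriminant argument: since $\Q(\alpha\varepsilon)=K$, the polynomial $f_\varepsilon$ has $d$ distinct roots, and
$$
\mathrm{disc}(f_\varepsilon)\;=\;a_0^{2d-2}\prod_{1\le i<j\le d}\bigl(\sigma_i(\alpha\varepsilon)-\sigma_j(\alpha\varepsilon)\bigr)^2
$$
is a nonzero rational integer. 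Therefore $\prod_{i<j}|\sigma_i(\alpha\varepsilon)-\sigma_j(\alpha\varepsilon)|^2\ge a_0^{-(2d-2)}$, and by the geometric-mean principle some pair $(i,j)$ satisfies $|\sigma_i(\alpha\varepsilon)-\sigma_j(\alpha\varepsilon)|\ge a_0^{-2/d}$, a positive constant depending only on $\alpha$. Plugging this pair into~($\ref{Equation:y}$) gives $|y|\le m^{c_4}$, and then $|x|\le|y|\le m^{c_4}$. The conclusion of Theorem~$\ref{theoreme:principal}$ follows with $\cst{kappa1}=\max(c_1,c_4)$.

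Note that this lemma requires no diophantine machinery beyond the routine height estimates of Section~$\ref{S:parametres}$; all the genuine substance of Theorem~$\ref{theoreme:principal}$ therefore lies in the complementary regime $\max\{A,B\}>\kappa\log m$, which this lemma precisely sets aside.
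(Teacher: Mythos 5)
Your proof is correct, and it resolves the one step the paper leaves unspoken by a route that is slightly different from (and in fact slightly sharper than) the paper's. The paper's argument invokes the uniform lower bound $|\varphi_2(\alpha\varepsilon)-\varphi_1(\alpha\varepsilon)|\ge e^{-\cst{kappa:borneAetB2}A}$, valid for \emph{every} distinct pair $\varphi_1,\varphi_2\in\Phi$; this comes from the same discriminant identity you cite, but used together with the upper bound on the remaining factors, so the lower bound inevitably carries a factor $e^{-cA}$, which is then absorbed because $A\le\kappa\log m$. You instead use the geometric-mean principle on the discriminant to locate a \emph{single} pair whose separation is $\ge a_0^{-2/d}$, an $m$-free constant depending only on $\alpha$; since identity ($\ref{Equation:y}$) holds for every pair, you are free to plug in that favourable one, and your bound on $|y|$ then depends on $\kappa$ only through $\Btilde$. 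Both approaches work here, because in the end everything gets swallowed by $m^{O(1)}$; your version has the small advantage of a denominator bound independent of $A$, while the paper's uniform lower bound is needed anyway for other lemmas (and the paper bounds $|x|$ directly from the second formula in ($\ref{Equation:y}$), whereas you use the standing reduction $|x|\le|y|$, which is equally valid at this stage). Your closing observation — that this lemma uses no transcendence machinery and the genuine content of Theorem~$\ref{theoreme:principal}$ is the complementary regime — is also exactly the architecture of the paper.
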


\begin{proof}[\indent Proof. ]
In this proof, the constants
$\cst{kappa:borneAetB1}$, $ \cst{kappa:borneAetB2}$, $ \cst{kappa:borneAetB3}$, $ \cst{yinferieurakappaBbis}$, $ \cst{yinferieurakappaBter}$
depend not only on $\alpha$ but also on $\kappa$.
We use ($\ref{Equation:y}$) with the estimates
$$
 |\varphi_1(\beta)-\varphi_2(\beta)|\le e^{\Newcst{kappa:borneAetB1}B},
 \qquad
|\varphi_2(\alpha\varepsilon)-\varphi_1(\alpha\varepsilon)|\ge e^{-\Newcst{kappa:borneAetB2}A},
$$
and the estimates
$$
|\varphi_1(\beta)\varphi_2(\alpha\varepsilon)-\varphi_2(\beta)\varphi_1(\alpha\varepsilon)| \le
e^{\Newcst{kappa:borneAetB3}(A+B)},
$$
to successively obtain the upper bounds $ |y| \le m^{\Newcst{yinferieurakappaBbis} }$ and then
$|x|\le m^{ \Newcst{yinferieurakappaBter} }$.
 Therefore it follows that, if an upper bound of both $A$ and $B$ is $\kappa \log m$, the proof of Theorem $\ref{theoreme:principal}$ is secured.
\end{proof}

The goal is now to show that we can assume that we always are under the hypothesis $\max\{A,B\}\le \kappa \log m$.
 In the next two subsections, we show that this goal is achieved if we assume that an upper bound of either $A$ or $B$ is given by
a constant times $\log m$. This will mean that there is no restriction in supposing $A$ and $B$ larger than $\Newcst{kappa:minorationAetB}\log m$ with a constant $\cst{kappa:minorationAetB}$ we may assume to be sufficiently large. Once more, we could produce a convenient explicit value for this constant which will make valid the arguments which will be used.

\subsection{
Proof of Theorem \boldmath  $\ref{theoreme:principal}$  if  $A \leq \kappa'  \log m$
}\label{SS:minorationA}

\mbox{}\indent
In the next lemma, we prove that if $A \leq \kappa'  \log m$,
then we have the inequality $\max\{A,B\} \leq \kappa\log m$, whereupon Lemma  $\ref{Lemme:AetBgrands}$ states that Theorem  $\ref{theoreme:principal}$  holds true.

\begin{lemme}\label{Lemme:Agrand}
If $\kappa'>0$ is a constant such that $A\le \kappa' \log m$, then the hypothesis
$\max\{A,B\}\le \kappa \log m$ of Lemma $\ref{Lemme:AetBgrands}$ holds true with a constant $\kappa$ depending
upon $\alpha$ and $\kappa'$.
\end{lemme}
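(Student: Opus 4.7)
The plan is to bound $|y|$ polynomially in $m$, from which the conclusion $B\le \kappa\log m$ follows via the chain $|y|\le m^{O(1)}\Rightarrow \rmh(\beta)=O(\log m)\Rightarrow \Btilde=O(\log m)\Rightarrow B\le \cst{mminBtilde}(\Btilde+\log m)=O(\log m)$ already assembled in Section~\ref{S:parametres}. A first step is to unpack the hypothesis: $A\le\kappa'\log m$ combined with $\rmh(\alpha\varepsilon)\le\cst{majhauteuralphaepsilon}A$ forces $\rmh(\alpha\varepsilon)=O(\log m)$, whence $|\sigma_i(\alpha\varepsilon)|\le m^{O(1)}$ for every embedding; moreover Liouville's inequality applied to the nonzero algebraic number $\sigma_i(\alpha\varepsilon)-\sigma_j(\alpha\varepsilon)$, whose height is bounded by $2\rmh(\alpha\varepsilon)+\log 2$, yields $|\sigma_i(\alpha\varepsilon)-\sigma_j(\alpha\varepsilon)|\ge m^{-O(1)}$ for $i\ne j$, and in particular $|\Imag(\sigma_i(\alpha\varepsilon))|\ge m^{-O(1)}$ for every complex $\sigma_i$.

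Since $K$ has at most one real embedding, the nonreal embeddings come in complex conjugate pairs, and for each complex $\sigma_i$ one has $|\sigma_i(\beta)|=|\overline{\sigma_i}(\beta)|\ge |\Imag(\sigma_i(\alpha\varepsilon))|\cdot |y|\ge m^{-O(1)}|y|$, exactly as in the proof of Lemma~\ref{Lemme:ThueTotalementImaginaire}. In the totally imaginary case all $d$ embeddings are complex, and the Thue inequality $|a_0|\prod_i|\sigma_i(\beta)|\le m$ immediately forces $(m^{-O(1)}|y|)^d\le m/|a_0|$, hence $|y|\le m^{O(1)}$, and one is done by the reduction above.

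In the case of exactly one real embedding $\sigma$, the $d-1$ complex factors only yield a one-sided estimate of the form $|\sigma(\beta)|\le m^{O(1)}/|y|^{d-1}$, so the real factor must be bounded from below by a Baker-type argument. Writing $\varepsilon=\zeta\epsilon_1^{a_1}\cdots\epsilon_r^{a_r}$ with $|a_k|\le A=O(\log m)$ and factoring
$$x-\sigma(\alpha\varepsilon)y=\sigma(\alpha\varepsilon)y\bigl((x/y)\sigma(\alpha)^{-1}\sigma(\epsilon_1)^{-a_1}\cdots\sigma(\epsilon_r)^{-a_r}-1\bigr),$$
I recognize inside the parentheses an expression of the form $\gamma_0\gamma_1^{c_1}\cdots\gamma_s^{c_s}-1$ to which Corollary~\ref{Cor:MinorationCLL1} (or~\ref{Cor:MinorationCLL2}) applies, with exponents bounded by $A$ and heights that are $O(1)$ for the $\sigma(\epsilon_k)$ and $\sigma(\alpha)$ factors. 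The resulting effective lower bound, compared against the Thue upper bound $|\sigma(\beta)|\le m^{O(1)}/|y|^{d-1}$, forces $|y|\le m^{O(1)}$ in this case too.

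The main obstacle will clearly be the one-real-embedding situation: the elementary pairing argument immediately handles the totally imaginary case, but the presence of a real factor leaves one direction uncontrolled by the Thue equation alone, and this is precisely where the linear-forms-in-logarithms input of Section~\ref{SS:outilsdiophantiens} becomes indispensable. Once $|y|\le m^{O(1)}$ is secured in both cases, the implication chain described in the first paragraph closes the proof, with the final constant $\kappa$ depending on $\alpha$ and $\kappa'$ as required.
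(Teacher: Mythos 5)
Your first two paragraphs are sound and give a genuinely elementary treatment of the totally imaginary case: once $A\le\kappa'\log m$, the hypothesis controls $\rmh(\alpha\varepsilon)$, Liouville makes $|\Imag(\sigma_i(\alpha\varepsilon))|\ge m^{-O(1)}$, and the product of quadratic factors bounds $|y|$ directly. This is a real alternative to the paper, which does \emph{not} specialise to the totally imaginary case here but runs a single Baker argument via the Siegel unit equation for both types of $K$.

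The third paragraph, however, has a genuine gap. In the linear form $\gamma_0\gamma_1^{c_1}\cdots\gamma_s^{c_s}-1$ you propose, the factor $\gamma_0=x/y$ has absolute logarithmic height $\rmh(x/y)=\log\max\{|x|,|y|\}=\log|y|$, which is exactly the quantity you are trying to bound; you list the heights of the $\sigma(\epsilon_k)$ and $\sigma(\alpha)$ factors as $O(1)$ but are silent on this one. Plugging $H_s\asymp\log|y|$ into Corollary~\ref{Cor:MinorationCLL1}, with all other heights $O(1)$ and exponents $\le A$, forces $C_1=\max\{2,\;O(A/\log|y|)\}$, which equals $2$ once $\log|y|\ge A$; the lower bound is then $\exp\{-\kappa H_1\cdots H_s\log 2\}=|y|^{-\kappa'}$, where $\kappa'$ is a Baker constant times a product of $O(1)$ heights, in general far larger than $d$. (Corollary~\ref{Cor:MinorationCLL2} fares worse: $C_2=A$ gives $|y|^{-\kappa'\log A}$.) Compared with the Thue-side upper bound $|(x/y)\sigma(\alpha\varepsilon)^{-1}-1|\le m^{O(1)}/|y|^{d}$, this yields $(d-\kappa')\log|y|\le O(\log m)$, which is vacuous for $\kappa'\ge d$. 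Indeed Liouville's inequality already gives $|(x/y)\sigma(\alpha\varepsilon)^{-1}-1|\ge m^{-O(1)}|y|^{-d}$, the \emph{same} exponent $d$, so your Baker estimate is strictly weaker than Liouville and produces no contradiction. The way out is the one the paper takes: use the Siegel unit equation (\ref{Equation:SommeSixTermes}) with $\sigma_b,\tau_b$ and a third embedding so that $x$ and $y$ are eliminated, and the unknowns become the unit exponents $b_j$ (of size $B$) coming from $\beta=\varrho\eta$; the algebraic numbers $\gamma_j$ then all have height $O(\log m)$, the $\log C_1$ factor with $C_1=2+B/(1+\log m)$ is genuinely small relative to the upper bound $e^{-\kappa B+\kappa'A}$, and one extracts $B\lesssim\log m\cdot\log\bigl(B/\log m\bigr)+A$, hence $B=O(\log m)$.
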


\begin{proof}[\indent Proof]
In this proof, the constants
$\cst{kappa:minorA1}$, $ \cst{kappa:minorA4}$, $ \cst{kappa:minorA3}$,
 $\cst{kappa:minorA2}$, $ \cst{kappa:bornepourAetB4}$
depend not only on $\alpha$ but also on $\kappa'$.
Let us use the unit equation ($\ref{Equation:SommeSixTermes}$) with the two embeddings $\sigma_b,\tau_b$ and a third embedding
 $\varphi$ distinct from $\sigma_b$ and $\tau_b$:
\begin{equation}\label{equation:6termespourAgrand}
\frac{\varphi(\beta) }
{\sigma_b(\beta) }
 \cdot \frac{\sigma_b(\alpha\varepsilon)-\tau_b(\alpha\varepsilon)} {\varphi(\alpha\varepsilon)- \tau_b(\alpha\varepsilon)}
-1=
-
\frac{\tau_b(\beta)}
{\sigma_b(\beta) }
 \cdot \frac{\varphi(\alpha\varepsilon)- \sigma_b(\alpha\varepsilon)} {\varphi(\alpha\varepsilon)- \tau_b(\alpha\varepsilon)}\cdotp
\end{equation}
The member on the right side is nonzero, since $\varphi\not=\sigma_b$;  it has a modulus $\leq e^{-\Newcst{kappa:minorA1}B}e^{\cst{kappa:minorA4}A}$, since
$$
|\tau_b(\beta)|\le e^{- \cst{kappa:minsigma} B},
\quad
|\sigma_b(\beta)|\ge e^{\cst{kappa:minsigma} B}
$$
and
$$
\left | \frac{\varphi(\alpha\varepsilon)- \sigma_b(\alpha\varepsilon)} {\varphi(\alpha\varepsilon)- \tau_b(\alpha\varepsilon)}
\right|\le e^{\Newcst{kappa:minorA4}A}.
$$

Put $s=r+1$, and
$$
\gamma_j= \frac{\varphi(\epsilon_j) }
{\sigma_b(\epsilon_j) },\quad c_j=b_j \quad (1\le j\le r), \quad
\gamma_s= \frac{\varphi(\varrho) }
{\sigma_b(\varrho) } \cdot \frac{\sigma_b(\alpha\varepsilon)-\tau_b(\alpha\varepsilon)} {\varphi(\alpha\varepsilon)- \tau_b(\alpha\varepsilon)},
\quad c_s=1,
$$
so
$$
\gamma_1^{c_1}\cdots\gamma_s^{c_s}=\frac{\varphi(\beta) }
{\sigma_b(\beta) }
 \cdot \frac{\sigma_b(\alpha\varepsilon)-\tau_b(\alpha\varepsilon)} {\varphi(\alpha\varepsilon)- \tau_b(\alpha\varepsilon)}\cdotp
$$
Put
$$
H_1=\cdots=H_r=\Newcst{kappa:minorA3}, \quad
H_s=\cst{kappa:minorA3} (1+ \log m),\quad
C_1=2+\frac{B}{1+\log m}\cdotp
$$
For $1\le j\le r$, we have $|c_j|\le B$ and
$$
\frac{H_j}{H_s}|c_j| \le \cst{kappa:minorA3} \frac{B}{H_s}\le \frac{B}{1+\log m}\le C_1.
$$
The upper bound
$$
\frac{H_j}{H_s}|c_j| \le C_1
$$
is still valid for $j=s$ since $c_s=1$.	 Corollary $\ref{Cor:MinorationCLL1}$ shows that a lower bound of the left member
	of ($\ref{equation:6termespourAgrand}$)
	is given by $\exp\{-\Newcst{kappa:minorA2} H_s\log C_1\}$, whereupon the above upper bound and the last lower bound lead to
$$
 \exp\{-\cst{kappa:minorA2} H_s\log C_ 1\} \le  \exp\{-\cst{kappa:minorA1}B+\cst{kappa:minorA4}A \}.
$$
Therefore
$$
\cst{kappa:minorA1}B\le \cst{kappa:minorA2} H_s\log C_1 + \cst{kappa:minorA4}A,
$$
which shows that
$B\leq \Newcst{kappa:bornepourAetB4} \log m$, which is what we wanted to establish.
\end{proof}

From now on, without loss of generality we may assume that $A$ is sufficiently large, say, $A\geq \cst{kappa:minorationAetB}\log m$.

\subsection{Proof of Theorem \boldmath  $\ref{theoreme:principal}$ if  $B \leq \kappa''  \log m$
 }\label{SS:minorationB}

\mbox{}\indent
In the next lemma, we prove that if $B \leq \kappa'' \log m$, then we have
the inequality    $\max\{A,B\} \leq \kappa \log m$, whereupon Lemma $\ref{Lemme:AetBgrands}$ states that Theorem 2 holds true.

\begin{lemme}\label{Lemme:Bgrand}
If $\kappa''>0$ is a constant such that $B\le \kappa'' \log m$, then the hypothesis
$\max\{A,B\}\le \kappa \log m$ of Lemma $\ref{Lemme:AetBgrands}$ holds true with a constant $\kappa$ depending on $\alpha$ and $\kappa''$.
\end{lemme}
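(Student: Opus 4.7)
The plan is to mirror the argument of Lemma~\ref{Lemme:Agrand}, interchanging the roles of $\alpha\varepsilon$ and $\beta$ and replacing the pair $(\sigma_b,\tau_b)$ by $(\sigma_a,\tau_a)$. I would apply the Siegel-type unit equation (\ref{Equation:SommeSixTermes}) to the three embeddings $\sigma_a,\tau_a,\varphi$, where $\varphi\in\Phi$ is chosen distinct from $\sigma_a$ and $\tau_a$. A routine rearrangement yields the identity
\begin{equation*}
\frac{\varphi(\alpha\varepsilon)}{\sigma_a(\alpha\varepsilon)} \cdot \frac{\sigma_a(\beta)-\tau_a(\beta)}{\varphi(\beta)-\tau_a(\beta)} - 1 = -\frac{\tau_a(\alpha\varepsilon)}{\sigma_a(\alpha\varepsilon)} \cdot \frac{\varphi(\beta)-\sigma_a(\beta)}{\varphi(\beta)-\tau_a(\beta)},
\end{equation*}
whose right-hand side is nonzero since $\varphi\neq\sigma_a$ and the conjugates of $\alpha\varepsilon$ are pairwise distinct.

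To bound the right-hand side from above, I would use Lemma~\ref{Lemme:PlongementAdapte} applied to $\alpha\varepsilon$ to get $|\tau_a(\alpha\varepsilon)/\sigma_a(\alpha\varepsilon)|\le e^{-\kappa_1 A}$; the trivial bound $|\varphi(\beta)-\sigma_a(\beta)|\le e^{\kappa_2 B}$; and a Liouville-type estimate on the nonzero algebraic integer $\varphi(\beta)-\tau_a(\beta)$ (whose absolute logarithmic height is $O(B+\log m)$) to obtain $|\varphi(\beta)-\tau_a(\beta)|\ge e^{-\kappa_3(B+\log m)}$. Under the hypothesis $B\le\kappa''\log m$, these estimates combine to
\begin{equation*}
|\text{RHS}|\le \exp\{-\kappa_1 A + \kappa_4 \log m\}.
\end{equation*}

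For the matching lower bound, I would expand $\alpha\varepsilon = \alpha\zeta\prod_{j=1}^r \epsilon_j^{a_j}$ and rewrite the left-hand side as $\gamma_1^{c_1}\cdots\gamma_s^{c_s}-1$, where $s=r+1$, $\gamma_j=\varphi(\epsilon_j)/\sigma_a(\epsilon_j)$ and $c_j=a_j$ for $j\le r$, while
\begin{equation*}
\gamma_s = \frac{\varphi(\alpha\zeta)}{\sigma_a(\alpha\zeta)}\cdot \frac{\sigma_a(\beta)-\tau_a(\beta)}{\varphi(\beta)-\tau_a(\beta)}, \qquad c_s=1.
\end{equation*}
The heights $\rmh(\gamma_j)$ for $j\le r$ are bounded by constants depending only on $\alpha$, while $\rmh(\gamma_s)\le \kappa_5(1+\log m)$ since $\rmh(\beta)=O(\log m)$ under our hypothesis on $B$. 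Taking $H_j=\kappa_5$ for $j<s$, $H_s=\kappa_5(1+\log m)$, and $C_1=2+\kappa_6 A/(1+\log m)$ meets the hypotheses of Corollary~\ref{Cor:MinorationCLL1}, which then yields
\begin{equation*}
|\text{LHS}| > \exp\{-\kappa_7(1+\log m)\log C_1\}.
\end{equation*}

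Combining the two bounds produces the inequality
\begin{equation*}
\kappa_1 A \le \kappa_4\log m + \kappa_7(1+\log m)\log\bigl(2+\kappa_6 A/(1+\log m)\bigr),
\end{equation*}
and the substitution $T=A/(1+\log m)$ reduces this to an inequality of the shape $T\lesssim 1+\log(2+T)$, which forces $T$ to be bounded by a constant depending only on $\alpha$ and $\kappa''$. Hence $A\le\kappa\log m$, and Lemma~\ref{Lemme:AetBgrands} completes the proof. The main obstacle is identical to that of Lemma~\ref{Lemme:Agrand}: calibrating $H_s$ and $C_1$ carefully enough that the lower bound from linear forms in logarithms, when confronted with the exponentially small upper bound, gives a genuinely linear bound $A=O(\log m)$ rather than a slightly super-linear one.
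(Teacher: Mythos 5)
Your proof follows essentially the same route as the paper: the same unit equation applied to $\sigma_a,\tau_a,\varphi$, the same rearranged identity, the same decomposition $\gamma_1^{c_1}\cdots\gamma_s^{c_s}$ with $\gamma_j=\varphi(\epsilon_j)/\sigma_a(\epsilon_j)$, $c_j=a_j$ and $\gamma_s$ carrying the $\beta$-quotient with $c_s=1$, and the same choice of $H_j$ and $C_1$ fed into Corollary~\ref{Cor:MinorationCLL1}. The only cosmetic difference is that you bound the numerator and denominator of $(\varphi(\beta)-\sigma_a(\beta))/(\varphi(\beta)-\tau_a(\beta))$ separately via a Liouville-type estimate while the paper states the bound on the quotient directly; the closing elementary argument forcing $A=O(\log m)$ is identical.
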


\begin{proof}[\indent Proof]

In this proof, the constants
$\cst{kappa:minorB1}$, $
\cst{kappa:minorB4}$, $
 \cst{kappa:minorB3}$, $
 \cst{kappa:minorB2}$, $ \cst{kappa:bornepourAetB5}$
depend not only on $\alpha$ but also on $\kappa''$. \medskip

Assume $B\le \kappa''\log m$.
Let us use the unit equation ($\ref{Equation:SommeSixTermes}$) with the two embeddings $\sigma_a,\tau_a$
and a third one $\varphi$ distinct from $\sigma_a$ and $\tau_a$:
\begin{equation}\label{equation:6termespourBgrand}
\frac{ \varphi(\alpha\varepsilon) }
{\sigma_a(\alpha\varepsilon) }
 \cdot \frac{\sigma_a(\beta)-\tau_a(\beta)} { \varphi(\beta)- \tau_a(\beta)}-1=
-\frac{\tau_a(\alpha\varepsilon)}
{\sigma_a(\alpha\varepsilon) }
 \cdot \frac{ \varphi(\beta)- \sigma_a(\beta)} { \varphi(\beta)- \tau_a(\beta)}\cdotp
\end{equation}
The member on the right side is nonzero, since $\varphi\not=\sigma_a$;  it has a modulus $\le e^{-\Newcst{kappa:minorB1}A} e^{\cst{kappa:minorB4}B}$ since
$$
|\tau_a(\alpha\varepsilon)|\le e^{- \cst{kappa:minsigma} A},
\quad
|\sigma_a(\alpha\varepsilon)|\ge e^{\cst{kappa:minsigma} A}
\quad \mbox{and} \quad
\left | \frac{ \varphi(\beta)- \sigma_a(\beta)} { \varphi(\beta)- \tau_a(\beta)}
\right|\le e^{\Newcst{kappa:minorB4}B}.
$$
Put $s=r+1$ and
$$
\gamma_j= \frac{\varphi(\epsilon_j) }
{\sigma_a(\epsilon_j) },\quad c_j=a_j \quad (1\le j\le r), \quad
\gamma_s= \frac{\varphi(\alpha\zeta) }
{\sigma_a(\alpha\zeta) } \cdot\frac{ \sigma_a(\beta)- \tau_a(\beta)} { \varphi(\beta)- \tau_a(\beta)},
\quad c_s=1,
$$
so
$$
\gamma_1^{c_1}\cdots\gamma_s^{c_s}=
\frac{ \varphi(\alpha\varepsilon) }
{\sigma_a(\alpha\varepsilon) }
 \cdot \frac{\sigma_a(\beta)-\tau_a(\beta)} { \varphi(\beta)- \tau_a(\beta)}\cdotp
$$
Now put
$$
H_1=\cdots=H_r=\Newcst{kappa:minorB3}, \quad
H_s=\cst{kappa:minorB3}(1+ \log m),\quad
C_1= 2+\frac{A}{1+\log m}\cdotp
$$
Corollary $\ref{Cor:MinorationCLL1}$ shows that a lower bound of the left member
	of ($\ref{equation:6termespourBgrand}$)
	 is given by
 	$\exp\{-\Newcst{kappa:minorB2} H_s\log C_1\}$, whereupon the above upper bound and the last lower bound lead to
$$
\exp\{-\cst{kappa:minorB2} H_s\log C_1\}
\le \exp\{ -\cst{kappa:minorB1} A +  \cst{kappa:minorB4}B \}.
$$
 Therefore
$$
\cst{kappa:minorB1} A\le   \cst{kappa:minorB4}B + \cst{kappa:minorB2} H_s\log C_1.
$$
We conclude $A\leq \Newcst{kappa:bornepourAetB5} \log m$, which is what we wanted to achieve.
\end{proof}

\subsection{Consequences}\label{SS:consequences}

\mbox{}\indent
From now on, without loss of generality, we may assume that both $A$ and $B$ are larger than $ \cst{kappa:minorationAetB}\log m$.
Since $A$ is sufficiently large, we have $|\sigma_a(\alpha\varepsilon)|\ge 2$ and $|\tau_a(\alpha\varepsilon)|\le 1/2$.
Similarly, since $B$ is sufficiently large, we have $|\sigma_b(\beta)| \ge  2$ and $|\tau_b(\beta)| \le1/2$. \medskip

By using Lemma $\ref{Lemme:PlongementAdapte}$ with the bounds ($\ref{Equation:EstimationsTriviales}$),
we deduce that there exist some constants
$\Newcst{kappa:majsigma}$ et $\Newcst{kappa:minsigma}$ such that
\begin{equation}\label{Equation:MajMinsigma}
\left\{
\begin{array}{lll}
 e^{\cst{kappa:minsigma} A}\leq
|\sigma_a(\alpha\varepsilon)|\le e^{\cst{kappa:majsigma} A},&
\quad
e^{-\cst{kappa:majsigma} A} \le |\tau_a(\alpha\varepsilon)|\leq
e^{- \cst{kappa:minsigma} A},
\\[2mm]
e^{\cst{kappa:minsigma} B}
\leq |\sigma_b(\beta)|\le e^{\cst{kappa:majsigma} B},
&\quad
e^{-\cst{kappa:majsigma} B}
\leq |\tau_b(\beta)|\le e^{- \cst{kappa:minsigma} B}.
\end{array} \right.
\end{equation}

{\bf Remark.}
Note that the constant $\cst{kappa:minsigma}$ is sufficiently small (it comes into play as a lower bound)
while $\cst{kappa:majsigma}$ is sufficiently large (it comes into play as an upper bound).

\begin{lemme}\label{lemme:taubalphaetsigmaabeta}
We have
$$
|\tau_b(\alpha\varepsilon)| \le 2 \quad \hbox{and}\quad
|\sigma_a(\beta)| \ge e^{\Newcst{minsigmaabeta} A}.
$$
\end{lemme}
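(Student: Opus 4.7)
The plan is to unfold the definition $\beta = x - \alpha\varepsilon\, y$ at the two embeddings in question and use nothing beyond the triangle inequality together with the normalizations already in force. Since $x,y\in\Z$, any embedding $\varphi\in\Phi$ yields $\varphi(\beta) = x - \varphi(\alpha\varepsilon)\, y$, so all the work is in choosing $\varphi$ judiciously.

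For the first inequality, take $\varphi = \tau_b$, so that $\tau_b(\alpha\varepsilon)\, y = x - \tau_b(\beta)$. The paragraph preceding the lemma gives $|\tau_b(\beta)| \le 1/2$; Section~\ref{S:PolynomeReciproque} furnishes the reduction $|x|\le|y|$; and $xy\ne 0$ with $y\in\Z$ forces $|y|\ge 1$. Dividing by $y$ gives $|\tau_b(\alpha\varepsilon)| \le |x|/|y| + |\tau_b(\beta)|/|y| \le 1 + 1/2 \le 2$, which is the claim.

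For the second inequality, take $\varphi = \sigma_a$, so that $\sigma_a(\beta) = x - \sigma_a(\alpha\varepsilon)\, y$. The bounds \eqref{Equation:MajMinsigma} provide $|\sigma_a(\alpha\varepsilon)| \ge e^{\cst{kappa:minsigma} A}$, and the preamble to the lemma already records that for $A$ sufficiently large this quantity exceeds $2$. Combining $|x|\le |y|$ and $|y|\ge 1$ with the reverse triangle inequality,
\[
|\sigma_a(\beta)| \;\ge\; |\sigma_a(\alpha\varepsilon)|\,|y| - |x| \;\ge\; \bigl(|\sigma_a(\alpha\varepsilon)| - 1\bigr)|y| \;\ge\; \tfrac{1}{2}|\sigma_a(\alpha\varepsilon)| \;\ge\; \tfrac{1}{2}\, e^{\cst{kappa:minsigma} A}.
\]
The factor $1/2$ is absorbed into the exponent by taking any $\cst{minsigmaabeta} < \cst{kappa:minsigma}$, which is legitimate because $A\ge \cst{kappa:minorationAetB}\log m$ is assumed large. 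There is no genuine obstacle here: both bounds come out of a single application of the triangle inequality at the right embedding, fed by the already established estimates on $|\sigma_a(\alpha\varepsilon)|$ and $|\tau_b(\beta)|$ and the normalization $|x|\le|y|$. No diophantine input is required.
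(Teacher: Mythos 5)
Your proof is correct and follows essentially the same route as the paper: for the first inequality you unfold $\tau_b(\beta)=x-\tau_b(\alpha\varepsilon)y$ and use $|\tau_b(\beta)|\le 1/2$, $|x|\le|y|$, $|y|\ge 1$ (the paper organizes the triangle inequality slightly differently, multiplying through by $|y|$ and using $|\tau_b(\beta)|\le 1\le |x|$, but the inputs and the bound are the same); for the second inequality you apply the reverse triangle inequality at $\sigma_a$ exactly as the paper does, absorbing the factor $1/2$ into the exponent via the largeness of $A$. No substantive difference.
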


 \begin{proof}[\indent Proof]
On the one hand, we have
$$
|x-\tau_b(\alpha\varepsilon) y| =|\tau_b(\beta)| \le 1 \le |x|,
$$
and since $|x|\le |y|$, we get
$$
|\tau_b(\alpha\varepsilon) y|\le 2|x|\le 2|y|.
$$
On the other hand, upon using $|x|\le |y|$ and $|\sigma_a(\alpha\varepsilon)|\ge 2$, we find
$$
|\sigma_a(\beta) | \ge |\sigma_a(\alpha\varepsilon)y|-|x| \ge (|\sigma_a(\alpha\varepsilon)| -1) |y| \ge \frac{1}{2} |\sigma_a(\alpha\varepsilon)|
 \ge e^{\cst{minsigmaabeta} A}.
$$

\end{proof}

\section{An upper bound for \boldmath $A$, $|x|$, $|y|$ involving $B$}\label{S:MajorationAparBtilde}

 \mbox{} \indent
\mbox{} \indent From the relation ($\ref{Equation:y}$), we will deduce in an elementary way the following upper bound.

\begin{lemme}\label{Lemme:BestGrand}
We have
$$
A\le \Newcst{AmajoreparBtilde}
B
$$
and
$$
\log\max\{|x|,\; |y| \} \le \Newcst{yinferieurakappaB}
B.
$$
\end{lemme}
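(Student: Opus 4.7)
The plan is to derive both inequalities from a sandwich estimate on $|\sigma_a(\beta)|$. On the one hand, Lemma \ref{lemme:taubalphaetsigmaabeta} already furnishes the lower bound $|\sigma_a(\beta)| \ge e^{\cst{minsigmaabeta} A}$. On the other hand, the embedding $\sigma_b$ was chosen to maximize $|\varphi(\beta)|$, so trivially $|\sigma_a(\beta)| \le |\sigma_b(\beta)|$, and the upper estimate in (\ref{Equation:MajMinsigma}) gives $|\sigma_b(\beta)| \le e^{\cst{kappa:majsigma} B}$. Taking logarithms yields
$$
\cst{minsigmaabeta} A \le \cst{kappa:majsigma} B,
$$
which is the first assertion, with $\cst{AmajoreparBtilde} = \cst{kappa:majsigma}/\cst{minsigmaabeta}$.

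For the bound on $\max\{|x|,|y|\}$, our standing convention $|x|\le|y|$ reduces the task to bounding $|y|$. I would apply the reverse triangle inequality to the identity $\sigma_a(\beta) = x - \sigma_a(\alpha\varepsilon)\, y$ and exploit $|\sigma_a(\alpha\varepsilon)| \ge 2$ (available from Subsection \ref{SS:consequences} under our standing assumption that $A$ is sufficiently large). This gives
$$
|\sigma_a(\beta)| \ge |\sigma_a(\alpha\varepsilon)|\,|y| - |x| \ge \bigl(|\sigma_a(\alpha\varepsilon)| - 1\bigr)|y| \ge |y|,
$$
and combining with the previously obtained upper bound $|\sigma_a(\beta)| \le e^{\cst{kappa:majsigma} B}$ yields $|y| \le e^{\cst{kappa:majsigma} B}$, whence the second claim with $\cst{yinferieurakappaB}$ of the same order as $\cst{kappa:majsigma}$.

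There is essentially no obstacle here: no diophantine input such as a linear form in logarithms is needed, and the whole argument is a short chain of inequalities assembling the estimates already prepared. The only point to notice is that $\sigma_a$ is playing a double role, simultaneously making $|\sigma_a(\alpha\varepsilon)|$ large (by its very definition) and $|\sigma_a(\beta)|$ large (this being the content of Lemma \ref{lemme:taubalphaetsigmaabeta}), while $\sigma_b$ delivers the matching upper bound on $|\sigma_a(\beta)|$ in terms of $B$. The real diophantine work, bounding $B$ in terms of $\log m$ and thus closing the argument, will have to come in the subsequent sections.
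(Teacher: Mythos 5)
Your proof is correct and in substance matches the paper's. The only cosmetic difference is that the paper derives $A\le \cst{AmajoreparBtilde}B$ by plugging $\varphi_1=\tau_a$, $\varphi_2=\sigma_a$ into the elimination identity (\ref{Equation:y}) to obtain $|y\,\sigma_a(\alpha\varepsilon)|\le 4|\sigma_b(\beta)|$ and then applying the sandwich (\ref{Equation:MajMinsigma}) to $|\sigma_a(\alpha\varepsilon)|$, whereas you sandwich $|\sigma_a(\beta)|$ directly between $e^{\cst{minsigmaabeta}A}$ (Lemma \ref{lemme:taubalphaetsigmaabeta}) and $|\sigma_b(\beta)|\le e^{\cst{kappa:majsigma}B}$; both routes assemble the same prepared estimates and yield the same conclusions.
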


\begin{proof}[\indent Proof]
Since $A$ is sufficiently large, we have $|\sigma_a(\alpha\varepsilon)|\ge 2 |\tau_a(\alpha\varepsilon)| $. We then deduce
$$
|\sigma_a(\alpha\varepsilon)-\tau_a(\alpha\varepsilon)|\ge
\frac{1}{2}|\sigma_a(\alpha\varepsilon)|.
$$
Let us use ($\ref{Equation:y}$) with $\varphi_2=\sigma_a$ and $\varphi_1=\tau_a$:
$$
y\bigl(\sigma_a(\alpha\varepsilon)-\tau_a(\alpha\varepsilon)\bigr)=
\tau_a(\beta)-\sigma_a(\beta).
$$
The very definition of $\sigma_b$ gives the upper bound
$$
|\sigma_a(\beta)-\tau_a(\beta)|\le 2|\sigma_b(\beta)|,
$$
from which we deduce
\begin{equation}\label{equation:majysigmaa}
|y\sigma_a(\alpha\varepsilon)|\le 4|\sigma_b(\beta)|.
\end{equation}
The inequalites
$$
e^{\cst{kappa:minsigma} A}\le |\sigma_a(\alpha\varepsilon) |\le | y\sigma_a(\alpha\varepsilon) | \le 4|\sigma_b(\beta)|
\le 4 e^{\cst{kappa:majsigma} B}
$$
imply $A\leq \cst{AmajoreparBtilde} B$.
From ($\ref{equation:majysigmaa}$) we deduce
$$
|y|\le 4 e^{\cst{kappa:majsigma} B}.
$$
The upper bound for $|x|$ follows from
$|x|\le |y|$.
 This concludes the proof of Lemma $\ref{Lemme:BestGrand}$.
\end{proof}

\section{On the unicity  of \boldmath $\tau_b$ }\label{S:UniciteTaub}

We plan to show that the embedding $\tau_b$, that we have defined, is unique.

\mbox{}\indent
\begin{lemme}\label{Lemme:UniciteTau}
Suppose that there exists $\varphi\in\Phi$, $\varphi\not=\tau_b$ such that we have $|\varphi(\beta)|=|\tau_b(\beta)|$.
Then $ B \le \Newcst{UniciteTau} $.
\end{lemme}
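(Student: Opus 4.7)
The plan is to exploit the hypothesis $|\varphi(\beta)|=|\tau_b(\beta)|$ by combining an elementary algebraic identity with Baker's theorem, so that an exponential upper bound in $B$ must eventually dominate a merely polynomial lower bound in $A$, forcing $B$ to be bounded.

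First I would write down the identity
$$
\frac{\varphi(\alpha\varepsilon)}{\tau_b(\alpha\varepsilon)}=\frac{x-\varphi(\beta)}{x-\tau_b(\beta)}\cdotp
$$
This follows directly from $\tau_b(\beta)=x-\tau_b(\alpha\varepsilon)y$ and $\varphi(\beta)=x-\varphi(\alpha\varepsilon)y$, since $x-\tau_b(\beta)=\tau_b(\alpha\varepsilon)y$ and $x-\varphi(\beta)=\varphi(\alpha\varepsilon)y$. Subtracting $1$ yields
$$
\frac{\varphi(\alpha\varepsilon)}{\tau_b(\alpha\varepsilon)}-1=\frac{\tau_b(\beta)-\varphi(\beta)}{x-\tau_b(\beta)}\cdotp
$$
The crucial elementary observation is that $x\in\Z\setminus\{0\}$ forces $|x|\geq 1$, while the consequences in Section $\ref{SS:consequences}$ give $|\tau_b(\beta)|\leq 1/2$, and the hypothesis yields $|\varphi(\beta)|=|\tau_b(\beta)|\leq 1/2$ as well. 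Therefore $|x-\tau_b(\beta)|\geq 1/2$ and $|\tau_b(\beta)-\varphi(\beta)|\leq 2|\tau_b(\beta)|$, so by $(\ref{Equation:MajMinsigma})$,
$$
\left|\frac{\varphi(\alpha\varepsilon)}{\tau_b(\alpha\varepsilon)}-1\right|\leq 4|\tau_b(\beta)|\leq 4\,e^{-\cst{kappa:minsigma}B}.
$$

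For the lower bound I would apply Corollary $\ref{Cor:MinorationCLL2}$ to the factorization
$$
\frac{\varphi(\alpha\varepsilon)}{\tau_b(\alpha\varepsilon)}=\frac{\varphi(\alpha\zeta)}{\tau_b(\alpha\zeta)}\prod_{j=1}^{r}\left(\frac{\varphi(\epsilon_j)}{\tau_b(\epsilon_j)}\right)^{a_j}.
$$
This quantity differs from $1$ because $\Q(\alpha\varepsilon)=K$ and $\varphi\neq\tau_b$. Taking $s=r+1$, the heights $H_1,\dots,H_s$ are bounded by constants depending only on $\alpha$ and on $\epsilon_1,\dots,\epsilon_r$, and $C_2\leq A+2$ since $|c_j|=|a_j|\leq A$ for $j\leq r$ and $|c_s|=1$. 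The corollary then yields
$$
\left|\frac{\varphi(\alpha\varepsilon)}{\tau_b(\alpha\varepsilon)}-1\right|>(A+2)^{-\kappa}
$$
for some constant $\kappa$ depending only on $\alpha$.

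Combining the two estimates gives $\cst{kappa:minsigma}\,B<\kappa\log(A+2)+\log 4$, and invoking Lemma $\ref{Lemme:BestGrand}$ ($A\leq\cst{AmajoreparBtilde}B$) yields $\cst{kappa:minsigma}\,B<\kappa\log(\cst{AmajoreparBtilde}B+2)+\log 4$, which is impossible unless $B$ is bounded by a constant $\cst{UniciteTau}$ depending only on $\alpha$. The conceptual crux is the identity in the second paragraph: it avoids the factor $|\tau_b(\alpha\varepsilon)|$, which could be as small as $e^{-O(A)}$ and would spoil the balance, by exploiting instead the integrality bound $|x|\geq 1$. This is precisely what makes the polynomial Baker bound (with constant heights) strong enough to beat the exponential decay in $B$.
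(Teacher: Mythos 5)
Your proposal is correct and matches the paper's argument essentially step for step: you establish the same inequality $\bigl|\varphi(\alpha\varepsilon)/\tau_b(\alpha\varepsilon)-1\bigr|\leq 4e^{-\cst{kappa:minsigma}B}$ (your denominator $x-\tau_b(\beta)$ is literally $\tau_b(\alpha\varepsilon)y$, which is what the paper bounds below by $1/2$ using $|x|\ge 1$), apply Corollary~$\ref{Cor:MinorationCLL2}$ to the same decomposition $\gamma_j=\varphi(\epsilon_j)/\tau_b(\epsilon_j)$, $\gamma_s=\varphi(\alpha\zeta)/\tau_b(\alpha\zeta)$, and close with Lemma~$\ref{Lemme:BestGrand}$. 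The remark at the end correctly identifies the role of the integrality of $x$ in avoiding the potentially tiny factor $|\tau_b(\alpha\varepsilon)|$, which is indeed the same mechanism the paper relies on.
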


\begin{proof}[\indent Proof]
 Suppose $|\varphi(\beta) | = | \tau_b(\beta)|$ with $\varphi\not=\tau_b$. Let us use
 ($\ref{Equation:y}$) with $\varphi_1=\varphi$, $\varphi_2=\tau_b$, under the form
 $$
-\varphi(\alpha\varepsilon) +\tau_b(\alpha\varepsilon)
=
\frac{1 }{|y|} \bigl(\varphi(\beta)-\tau_b(\beta) \bigr).
$$
We have
$$
|\varphi(\alpha\varepsilon) -\tau_b(\alpha\varepsilon) |
=
\frac{1 }{|y|} |\varphi(\beta) -\tau_b(\beta)
|\le
\frac{2|\tau_b(\beta)|}{|y|}\cdotp
$$
From
$$
|x-\tau_b(\alpha\varepsilon) y| =
|\tau_b(\beta)| \le
 e^{- \cst{kappa:minsigma} B}
  <\frac{1}{2},
$$
we deduce
$$
|\tau_b(\alpha\varepsilon) y|\ge |x|-\frac{1}{2}\ge \frac{1}{2}\cdotp
$$
Consequently,
$$
\left|
\frac{\varphi(\alpha\varepsilon) }{\tau_b(\alpha\varepsilon) }-1
\right|
\le
\frac{
2 |\tau_b(\beta) |}
{|\tau_b(\alpha\varepsilon) y|}
\le 4 |\tau_b(\beta) |
 \le 4 e^{- \cst{kappa:minsigma} B}.
 $$
Let us write the member on the left in the form $|\gamma_1^{c_1}\cdots \gamma_s^{c_s}-1|$ with
$s=r+1$,
$$
\gamma_i=\frac{\varphi(\epsilon_i)}{ \tau_b(\epsilon_i)},
\quad
c_i=a_i ,
\quad (i=1,\dots,r),
\quad
\gamma_s= \frac{\varphi(\alpha\zeta)}{\tau_b(\alpha\zeta)},
\quad
c_s=1 .
$$
From Corollary $\ref{Cor:MinorationCLL2}$ with
$$
H_1=\cdots=H_s=\Newcst{Hi},\quad C_2=A,
$$
(note that $C_2 \geq 2$ since $A$ is sufficiently large) we deduce
$$
|\gamma_1^{c_1}\cdots \gamma_s^{c_s}-1|\ge e^{-\Newcst{kappaMajB}\log A}.
$$
The above upper bound and the last lower bound lead to
$$
e^{-\cst{kappaMajB}\log A}\le
4 e^{- \cst{kappa:minsigma} B},
$$
hence
$B\le \Newcst{BmajoreparA} ( \log A)
$. Using the upper bound $A\le \cst{AmajoreparBtilde}
B$ of Lemma $\ref{Lemme:BestGrand}$ allows us to conclude.
\end{proof}

As outlined in \S$\ref{S:MajorationAparBtilde}$, the upper bound $ B \le \cst{UniciteTau} \log m$ allows to secure the proof of
Theorem $\ref{theoreme:principal}$. So Lemma
$\ref{Lemme:UniciteTau}$ allows us to suppose $|\varphi(\beta)|>|\tau_b(\beta)|$ for any $\varphi\in \Phi$ different from $\tau_b$.
In particular, the embedding $\tau_b$ is real.
This completes the proof in the case where the algebraic number field $\Q(\alpha)$ is totally imaginary. \medskip

When the algebraic number field $K$ has a unique real embedding, which is what we will assume from now on,
all embeddings different from $\tau_b$ are imaginary. In particular, since $\sigma_b\not=\tau_b$ we deduce $\sigma_b\not=\overline{\sigma_b}$.

\section{ An upper bound for \boldmath  $B$ involving $A $ }\label{S:DeuxiemeArgumentDiophantien}

\mbox{} \indent
Let us use the strategy of \S$\ref{S:Strategie}$ with the three embeddings $\sigma_b$, $\overline{\sigma_b}$ and $\tau_b$.

\begin{lemme}\label{Lemme:BmajoreParA}
There exists an effectively computable positive constant $\Newcst{BmajoreParA}$ such that
$$
B \le \cst{BmajoreParA}A .
$$
\end{lemme}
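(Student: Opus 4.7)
The plan is to mimic the arguments of Lemmas \ref{Lemme:Agrand} and \ref{Lemme:Bgrand}, this time applying the Siegel unit equation (\ref{Equation:SommeSixTermes}) with the three distinct embeddings $(\varphi_1,\varphi_2,\varphi_3)=(\sigma_b,\tau_b,\overline{\sigma_b})$. Rearranging exactly as in the derivation of (\ref{equation:6termespourAgrand}) yields
$$
\frac{\overline{\sigma_b}(\beta)}{\sigma_b(\beta)}\cdot\frac{\sigma_b(\alpha\varepsilon)-\tau_b(\alpha\varepsilon)}{\overline{\sigma_b}(\alpha\varepsilon)-\tau_b(\alpha\varepsilon)}-1=\frac{\tau_b(\beta)}{\sigma_b(\beta)}\cdot\frac{\sigma_b(\alpha\varepsilon)-\overline{\sigma_b}(\alpha\varepsilon)}{\overline{\sigma_b}(\alpha\varepsilon)-\tau_b(\alpha\varepsilon)}.
$$
Since $\tau_b$ is the unique real embedding of $K$ and $\Q(\alpha\varepsilon)=K$, neither $\sigma_b(\alpha\varepsilon)-\overline{\sigma_b}(\alpha\varepsilon)$ nor $\tau_b(\beta)$ vanishes, so the right-hand side is nonzero. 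Its modulus is at most $\exp(-2\cst{kappa:minsigma}B+\mu_1 A)$ for an effectively computable $\mu_1>0$: the factor $|\tau_b(\beta)/\sigma_b(\beta)|\le e^{-2\cst{kappa:minsigma}B}$ comes from (\ref{Equation:MajMinsigma}), while the remaining ratio of differences of embeddings of $\alpha\varepsilon$ is an algebraic number of height $O(\Atilde)=O(A)$, hence of house bounded by $e^{\mu_1 A}$.

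Writing $\beta=\varrho\,\epsilon_1^{b_1}\cdots\epsilon_r^{b_r}$, the left-hand side takes the form $\gamma_1^{c_1}\cdots\gamma_s^{c_s}-1$ with $s=r+1$, $c_j=b_j$ for $j\le r$, $c_s=1$, and
$$
\gamma_j=\frac{\overline{\sigma_b}(\epsilon_j)}{\sigma_b(\epsilon_j)}\quad(1\le j\le r),\qquad \gamma_s=\frac{\overline{\sigma_b}(\varrho)}{\sigma_b(\varrho)}\cdot\frac{\sigma_b(\alpha\varepsilon)-\tau_b(\alpha\varepsilon)}{\overline{\sigma_b}(\alpha\varepsilon)-\tau_b(\alpha\varepsilon)}.
$$
The heights $H_1,\dots,H_r$ can be taken as constants depending only on $\alpha$. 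The crucial choice is $H_s$: since $\rmh(\gamma_s)\le c\log m+c'\Atilde$, and since we are under the standing assumption $\log m\le A/\cst{kappa:minorationAetB}$ together with $\Atilde\le\cst{majhauteuralphaepsilon}A$, one may take $H_s$ linear in $A$, say $H_s=\mu_2 A$ with $\mu_2$ depending only on $\alpha$.

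One then applies Corollary \ref{Cor:MinorationCLL1} with $C_1=2+B/A$ (valid because $(H_j/H_s)|c_j|=O(B/A)$ for $j\le r$ and equals $1$ for $j=s$) to obtain the lower bound $|\gamma_1^{c_1}\cdots\gamma_s^{c_s}-1|>\exp(-\mu_3 A\log C_1)$ for an effectively computable $\mu_3>0$. Combining with the above upper bound yields $2\cst{kappa:minsigma}B\le \mu_1 A+\mu_3 A\log(2+B/A)$, i.e.
$$
B\le \lambda\,A\bigl(1+\log(B/A)\bigr)
$$
for an effectively computable constant $\lambda$. Setting $t=B/A$, this reads $t\le \lambda(1+\log t)$; since $t/\log t\to\infty$, this forces $t$ to be bounded by a constant depending only on $\lambda$, whence $B\le\cst{BmajoreParA}A$. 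The main subtlety — absent from Lemmas \ref{Lemme:Agrand} and \ref{Lemme:Bgrand}, where $H_s$ was proportional to $\log m$ — is that $H_s$ now scales with $A$, so the linear forms estimate produces only $B\le \lambda A\log(B/A)$; the final bootstrap exploiting the slow growth of $\log t$ versus $t$ is essential in order to upgrade this to $B=O(A)$.
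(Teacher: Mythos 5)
Your proof is correct and follows essentially the same route as the paper: the same Siegel unit equation with the triple $\{\sigma_b,\overline{\sigma_b},\tau_b\}$, the same $\gamma_j$ and $H_j$ (with $H_s$ proportional to $A$), the same choice $C_1=2+B/A$, and the same final bootstrap from $C_1\le\kappa\log C_1$ to $C_1=O(1)$. The only cosmetic difference is that you cite Corollary~\ref{Cor:MinorationCLL1} where the paper's text says Corollary~\ref{Cor:MinorationCLL2} (the paper's reference appears to be a slip, since the $C_1$-normalization is that of Corollary~\ref{Cor:MinorationCLL1}).
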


\begin{proof}[\indent Proof]
We use the relation ($\ref{Equation:SommeSixTermes}$) with $\varphi_1=\sigma_b$, $\varphi_2=\overline{\sigma_b}$, $\varphi_3 = \tau_b$,
 in the form
$$
 \overline{\sigma_b}(\beta)
 	\bigl(\sigma_b(\alpha\varepsilon)-\tau_b(\alpha\varepsilon)\bigr)
 -\sigma_b(\beta) \bigl(\overline{\sigma_b}(\alpha\varepsilon)-\tau_b(\alpha\varepsilon)\bigr)
+\tau_b(\beta) \bigl(\overline{\sigma_b}(\alpha\varepsilon)- \sigma_b(\alpha\varepsilon)\bigr) =0,
$$
and we divide by $\sigma_b(\beta) \bigl(\overline{\sigma_b}(\alpha\varepsilon)- \tau_b(\alpha\varepsilon)\bigr)$ (which is not $0$):
\begin{equation}\label{Equation:fllMajorationBparA}
\frac{ \overline{\sigma_b}(\beta) }
{\sigma_b(\beta) }
 \cdot \frac{\sigma_b(\alpha\varepsilon)-\tau_b(\alpha\varepsilon)} { \overline{\sigma_b}(\alpha\varepsilon)- \tau_b(\alpha\varepsilon)}
-1=
-
\frac{\tau_b(\beta)}
{\sigma_b(\beta) }
 \cdot \frac{ \overline{\sigma_b}(\alpha\varepsilon)- \sigma_b(\alpha\varepsilon)} { \overline{\sigma_b}(\alpha\varepsilon)- \tau_b(\alpha\varepsilon)}\cdotp
\end{equation}
The member on the right of ($\ref{Equation:fllMajorationBparA}$) is nonzero since $\sigma_b\not= \overline{\sigma_b}$. Let us show that its modulus is bounded from above by
$$
e^{\Newcst{ExpmoinskappaA}A} e^{-\Newcst{ExpmoinskappaB}B}.
$$
On the one hand, we have
$$
|\tau_b(\beta)| \le
e^{- \cst{kappa:minsigma} B},
\quad |\sigma_b(\beta) |\ge e^{\cst{kappa:minsigma} B};
$$
on the other hand, an upper bound for the absolute logarithmic height of the number
$$
\delta=
\frac{ \overline{\sigma_b}(\alpha\varepsilon)- \sigma_b(\alpha\varepsilon)} { \overline{\sigma_b}(\alpha\varepsilon)- \tau_b(\alpha\varepsilon)}
$$
is given by $\Newcst{kappa:majhauteurdeuxiemeargt} A$, hence $|\delta|\le e^{\cst{ExpmoinskappaA} A}$. \indent

\medskip

Let us write the term
$$
\frac{ \overline{\sigma_b}(\beta) }
{\sigma_b(\beta) }
 \cdot \frac{\sigma_b(\alpha\varepsilon)-\tau_b(\alpha\varepsilon)} { \overline{\sigma_b}(\alpha\varepsilon)- \tau_b(\alpha\varepsilon)}
$$
which appears on the left side of ($\ref{Equation:fllMajorationBparA}$)
in the form $\gamma_1^{c_1}\cdots \gamma_s^{c_s}$ with $s=r+1$, and
$$
 \gamma_j=\frac{ \overline{\sigma_b}(\epsilon_j)}{\sigma_b(\epsilon_j)},
\quad
c_j=b_j \quad (j=1,\dots, r),
$$
$$
\gamma_s=\frac{\sigma_b(\alpha\varepsilon)-\tau_b(\alpha\varepsilon)} { \overline{\sigma_b}(\alpha\varepsilon)- \tau_b(\alpha\varepsilon)}\cdotp
\frac{\overline{\sigma_b}(\varrho)}{\sigma_b(\varrho)},
\quad c_s=1.
$$
We have $\rmh(\gamma_s)\le \Newcst{hauteurgamma0} A $. \medskip

With
$$
 H_1=\cdots=H_{r}=\Newcst{MinorationAparB}, \quad H_s=\cst{MinorationAparB} A,
$$
we have, using Lemma $\ref{Lemme:BestGrand}$,
$$
\max_{1\le j\le s} \frac {H_j}{ H_s} {|c_j| }\le \frac {\Newcst{csurH} B}{H_s}\cdotp
$$
Moreover, with
$$
 C_1=2+\frac{ B }{A},
$$
we obtain from Corollary $\ref{Cor:MinorationCLL2}$ that a lower bound of
the modulus of the left member of ($\ref{Equation:fllMajorationBparA}$) is given by
 $\exp\{-\Newcst{AlogB} H_s\log C_1 \}$.
 The above upper bound and the last lower bound lead to
 $$
 \exp\{-\cst{AlogB} H_s\log C_1 \}\le
\exp\{ - \cst{ExpmoinskappaB}B + \cst{ExpmoinskappaA}A\}.
 $$
  Consequently,
 $$
 \cst{ExpmoinskappaB}B \le \cst{ExpmoinskappaA}A+ \cst{AlogB} H_s\log C_1.
 $$
 Therefore,
$$
2+\frac{B}{A} =  C_1  \leq  \Newcst{MajfinaleB}\log C_1,
$$
which allows to conclude $C_1\le \Newcst{BmajoreParApluslogm}$,
 namely $
B/A \le \cst{BmajoreParA}$ .
 \end{proof}

\section{Ultimate diophantine argument}\label{S:TroisiemeArgumentDiophantien}

\mbox{} \indent
We will use the strategy of \S$\ref{S:Strategie}$ with the three embeddings $ \sigma_a, \overline{\sigma_a},\tau_b$.

\begin{lemme}\label{Lemme:AmajoreParLogB}
We have
$$
\frac{ A+B}{\log m} \le \Newcst{kappa:AmajoreParLogB} \log
\frac{A+B}{1+\log m}\cdotp
$$
\end{lemme}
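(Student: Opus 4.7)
The plan is to apply the Siegel unit equation (\ref{Equation:SommeSixTermes}) with the three embeddings $\varphi_1=\sigma_a$, $\varphi_2=\overline{\sigma_a}$, $\varphi_3=\tau_b$, writing $u_i=\varphi_i(\alpha\varepsilon)$ and $v_i=\varphi_i(\beta)$. Regrouping the six terms gives $u_1v_2-u_2v_1=-v_3(u_2-u_1)-u_3(v_1-v_2)$, and dividing by $u_1v_2$ yields $1-\gamma = R$, where $\gamma := u_2v_1/(u_1v_2)$ and $R$ denotes the right-hand side. Since $\sigma_a$ is non-real (all embeddings distinct from $\tau_b$ are complex), a short check shows $\sigma_a(\alpha\varepsilon/\beta)\notin\R$, so $\gamma\neq 1$.

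For the upper bound, I exploit that $|u_2|=|u_1|$ and $|v_1|=|v_2|$ (conjugate embeddings), so $|u_2-u_1|/|u_1|\le 2$ and $|v_1-v_2|/|v_2|\le 2$. Combining with (\ref{Equation:MajMinsigma}), Lemma \ref{lemme:taubalphaetsigmaabeta}, and the comparability of $A$ and $B$ (Lemmas \ref{Lemme:BestGrand} and \ref{Lemme:BmajoreParA}), each of the two summands of $R$ is bounded by $e^{-\kappa_0(A+B)}$ for some effective $\kappa_0>0$, and hence so is $|1-\gamma|$.

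The crucial step is to decompose $\gamma$ so that the only factor whose height grows with $A$ is eliminated, and the non-unit factor has height proportional to $\log m$ alone. Substituting $\alpha\varepsilon=\alpha\zeta\epsilon_1^{a_1}\cdots\epsilon_r^{a_r}$ and $\beta=\varrho\epsilon_1^{b_1}\cdots\epsilon_r^{b_r}$ into $\gamma=\bigl(\overline{\sigma_a}(\alpha\varepsilon)/\sigma_a(\alpha\varepsilon)\bigr)\cdot\bigl(\sigma_a(\beta)/\overline{\sigma_a}(\beta)\bigr)$, the powers of each ratio $\gamma_i:=\sigma_a(\epsilon_i)/\overline{\sigma_a}(\epsilon_i)$ combine into the single exponent $b_i-a_i$. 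Thus $\gamma=\gamma_{r+1}\prod_{i=1}^r\gamma_i^{b_i-a_i}$, where $\gamma_{r+1}=\bigl(\overline{\sigma_a}(\alpha\zeta)/\sigma_a(\alpha\zeta)\bigr)\bigl(\sigma_a(\varrho)/\overline{\sigma_a}(\varrho)\bigr)$ involves only $\alpha$, $\zeta$ and $\varrho$; in particular $\rmh(\gamma_{r+1})\ll \log m$ because $\rmh(\varrho)\ll\log m$, while each $\rmh(\gamma_i)$ is bounded by a constant depending only on $\alpha$.

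I then apply Corollary \ref{Cor:MinorationCLL1} with $s=r+1$, $c_i=b_i-a_i$ for $i\le r$, $c_s=1$, $H_1=\cdots=H_r$ equal to a constant, $H_s=\kappa_1(1+\log m)$, and $C_1=2+\kappa_2(A+B)/(1+\log m)$, which is legitimate since $|b_i-a_i|\le A+B$. The resulting lower bound $|1-\gamma|\ge\exp\bigl\{-\kappa_3(1+\log m)\log C_1\bigr\}$ together with the upper bound gives $A+B\le\kappa_4(1+\log m)\log\bigl((A+B)/(1+\log m)\bigr)$, which is the statement of the lemma after absorbing the $1$ into $\log m$ via $m\ge 2$. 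The main obstacle, and the whole point of the third paragraph, is this regrouping: writing $\gamma$ naively as a product with separate exponents $a_i$ and $b_i$ would force $H_s$ to scale like $A$ (because of $\alpha\varepsilon$), producing only the vacuous bound $A+B\ll A\log(\cdot)$ instead of the desired $A+B\ll (\log m)\log(\cdot)$.
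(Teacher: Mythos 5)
Your proof follows essentially the same route as the paper's: the same three privileged embeddings $\sigma_a$, $\overline{\sigma_a}$, $\tau_b$, the same rearrangement of the Siegel identity (up to taking the reciprocal of the quantity $\gamma$), the same upper bound on the right member using Lemma~\ref{lemme:taubalphaetsigmaabeta} and ($\ref{Equation:MajMinsigma}$), and—crucially—the same decomposition of $\gamma$ in which the unit exponents combine to $a_i-b_i$ so that the only non-unit factor $\gamma_s$ depends on $\alpha\zeta$ and $\varrho$ alone and hence has height $\ll\log m$, allowing the choice $H_s\asymp 1+\log m$ and $C_1=2+(A+B)/(1+\log m)$. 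Your closing remark on why the naive grouping would force $H_s\asymp A$ and ruin the bound is precisely the implicit reason for the paper's substitution, and your citation of Corollary~\ref{Cor:MinorationCLL1} (rather than Corollary~\ref{Cor:MinorationCLL2} as the paper's text states) is in fact the one consistent with the definition of $C_1$ used.
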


\begin{proof}[\indent Proof]
We use the relation ($\ref{Equation:SommeSixTermes}$) with
$\varphi_1=\sigma_a$, $\varphi_2=\overline{\sigma_a}$, $\varphi_3=\tau_b$ under the form
\begin{equation}\label{equation:deuxiemeargumentdiophantien}
\frac{\sigma_a(\alpha\varepsilon) \overline{\sigma_a}(\beta)}
{\overline{\sigma_a}(\alpha\varepsilon)\sigma_a(\beta)}
-1=
\frac{\tau_b(\beta)}{\sigma_a(\beta)}
\left(
\frac{\sigma_a(\alpha\varepsilon)}
{\overline{\sigma_a}(\alpha\varepsilon)} -1
\right)
+
\frac{\tau_b(\alpha\varepsilon)}{\overline{\sigma_a}(\alpha\varepsilon)}
\left(
\frac{\overline{\sigma_a}(\beta)}
{\sigma_a(\beta)} -1
\right).
\end{equation}
The left member of ($\ref{equation:deuxiemeargumentdiophantien}$) is not $0$ since $\sigma_a\not=\overline{\sigma_a}$.
 The inequalities ($\ref{Equation:MajMinsigma}$) and Lemma $\ref{lemme:taubalphaetsigmaabeta}$ indicate that we have
$$
|\tau_b(\beta)|\le e^{- \cst{kappa:minsigma} B},
 \quad
|\tau_b(\alpha\varepsilon)|\le 2,
\quad
|\sigma_a(\beta)| =
|\overline{\sigma_a}(\beta)|\ge e^{\cst{minsigmaabeta} A}.
$$
Since
$$
\max\left\{
\left|
\frac{\sigma_a(\alpha\varepsilon)}
{\overline{\sigma_a}(\alpha\varepsilon)} -1
\right|,
\;
\left|
\frac{\overline{\sigma_a}(\beta)}
{\sigma_a(\beta)} -1
\right|
\right\}
\le 2,
$$
 we deduce that each of the two terms
$$
\frac{\tau_b(\beta)}{\sigma_a(\beta)}
\left(
\frac{\sigma_a(\alpha\varepsilon)}
{\overline{\sigma_a}(\alpha\varepsilon)} -1
\right)
\quad
\hbox{and}
\quad
\frac{\tau_b(\alpha\varepsilon)}{\overline{\sigma_a}(\alpha\varepsilon)}
\left(
\frac{\overline{\sigma_a}(\beta)}
{\sigma_a(\beta)} -1
\right)
$$
of the right member of ($\ref{equation:deuxiemeargumentdiophantien}$)
has a modulus bounded  from above by $e^{- \Newcst{MajdeAparBtilde} A}$. \medskip

Let us write
$$
\frac{\sigma_a(\alpha\varepsilon) \overline{\sigma_a}(\beta)}
{\overline{\sigma_a}(\alpha\varepsilon)\sigma_a(\beta)}
=\gamma_1^{c_1}\cdots \gamma_s^{c_s}
$$
with $ s=r+1$ and
$$
 \gamma_i=\frac{ \sigma_a(\epsilon_i)}{ \overline{\sigma_a}(\epsilon_i)}, \quad 
 c_i=a_i-b_i
 \quad (i=1,\dots, r),
 \quad
 \gamma_s=\frac{\sigma_a(\alpha\zeta) \overline{\sigma_a}(\varrho) }{\overline{\sigma_a}(\alpha\zeta) \sigma_a(\varrho) } ,
 \quad c_s=1.
 $$
 The absolute logarithmic height of $\gamma_s$ is bounded  from above by
 $$
 \rmh(\gamma_s)\le \Newcst{kappa:htgammas} \log m.
 $$
Let us also write
$$
 H_1=\cdots=H_{r}=\Newcst{MinorationBparA}, \quad H_s=\cst{MinorationBparA} (1+ \log m), \quad
 C_1=2+\frac{A+B}{1+\log m}\cdotp
$$
Corollary $\ref{Cor:MinorationCLL2}$ implies that the modulus of the left member of
 ($\ref{equation:deuxiemeargumentdiophantien}$) has a lower bound given by
 $\exp\{-\Newcst{AlogB1} H_s\log C_1 \}$.
   The above upper bound and the last lower bound lead to
   $$
   \exp\{-\cst{AlogB1} H_s\log C_1 \}
   \le
   2
\exp\left\{-
\cst{MajdeAparBtilde} A
\right\}.
 $$
  Consequently,
 $$
\frac{A+B}{\log m} \le \Newcst{AlogB2} \log C_1.
 $$
This completes the proof of Lemma $\ref{Lemme:AmajoreParLogB}$.
 \end{proof}

\section{End of the proof of Theorem \boldmath $\ref{theoreme:principal}$ }\label{S:FinDemonstrationTheoreme}

\mbox{} \indent
Thanks to Lemma $\ref{Lemme:BestGrand}$ we have
$$
A\le \cst{AmajoreparBtilde}
B
$$
while from Lemma $\ref{Lemme:BmajoreParA}$ we deduce
$$
B \le \cst{BmajoreParA} A ,
$$
and from Lemma $\ref{Lemme:AmajoreParLogB}$ we have
$$
\frac{ A+B}{\log m} \le \cst{kappa:AmajoreParLogB} \log
\frac{A+B}{ 1+ \log m}\cdotp
$$
We conclude $\max\{A,B\}\le \Newcst{kappafinal} \log m$, and this is what we wanted to show in order to apply Lemma $\ref{Lemme:AetBgrands}$.

\end{document}